\theoremstyle{plain} 
\newtheorem{theorem}{\indent\sc Theorem}[section]
\newtheorem{lemma}[theorem]{\indent\sc Lemma}
\newtheorem{proposition}[theorem]{\indent\sc Proposition}
\theoremstyle{definition} 
\newtheorem{definition}[theorem]{\indent\sc Definition}
\newtheorem{remark}[theorem]{\indent\sc Remark}
\newtheorem{example}[theorem]{\indent\sc Example}
\begin{document}

\title{On the Geometry of Lightlike Submanifolds in Metallic Semi-Riemannian
Manifolds}
\author{Feyza Esra Erdo\u{g}an, Selcen Y\"{u}ksel Perkta\c{s}, B\.{i}lal
Eftal Acet, \\
and Adara Monica Blaga}
\date{}
\maketitle

\begin{abstract}
In the present paper, we introduce screen transversal lightlike submanifolds
of metallic semi-Riemannian manifolds with its subclasses, namely screen
transversal anti-invariant, radical screen transversal and isotropic screen
transversal lightlike submanifolds, and give an example. We show that there
do not exist co-isotropic and totally screen transversal type of screen
transversal anti-invariant lightlike submanifolds of a metallic
semi-Riemannian manifold. We investigate the geometry of distributions
involved in the definition of such submanifolds and the conditions for the
induced connection to be a metric connection. Furthermore, we give a
necessary and sufficient condition for an isotropic screen transversal
lightlike submanifold to be totally geodesic.
\end{abstract}

\markboth{{\small\it {\hspace{8cm}On the Geometry of Lightlike Submanifolds}}}{\small\it{On the Geometry of Lightlike Submanifolds
\hspace{8cm}}}

\footnote{
2010 \textit{Mathematics Subject Classification}. 53C15, 53C25, 53C35.} 
\footnote{
\textit{Key words and phrases}. Metallic structure, Lightlike submanifold,
Screen transversal lightlike submanifold.}

\section{Introduction}

In Riemannian geometry, it is well known that the induced metric on a
submanifold of a Riemannian manifold is always a Riemannian one. But in
semi-Riemannian manifolds the induced metric by the semi-Riemannian metric
on the ambient manifold is not necessarily non-degenerate. This case leads
to provide an interesting type of submanifolds called lightlike
submanifolds. Because of degeneracy\ of the induced metric on lightlike
submanifolds, the tools which are used to investigate the geometry of
submanifolds in Riemannian case are not applicable in semi-Riemannian case
and so the classical theory fails while defining any induced object on a
lightlike submanifold. The main difficulties arise from the fact that the
intersection of the normal bundle and the tangent bundle of a lightlike
submanifold is nonzero. In order to resolve the difficulties that arise
during studying lightlike submanifolds, K. Duggal, A. Bejancu \cite{DB}
introduced a non-degenerate distribution called screen distribution to
construct a lightlike transversal vector bundle which does not intersect to
its lightlike tangent bundle. It is well-known that a suitable choice of
screen distribution gives rises to many substantial characterizations in
lightlike geometry. Many authors have studied the geometry of lightlike
submanifolds in different manifolds (see \cite%
{FC,BC,FBR,FBR1,SBE,BIS,BSE1,BIS1}). For a more comprehensive reading, we
refer \cite{DB,DS} and the references therein.

Different kinds of geometric structures (such as almost product, almost
contact, almost paracontact etc.) allow to get rich results while studying
on submanifolds. Recently, Riemannian manifolds with metallic structures are
one of the most studied topics in differential geometry.

In 2002, as a generalization of the Golden mean, metallic means family was
introduced by V. W. de Spinadel \cite{V.W4}, which contains the Silver mean,
the Bronze mean, the Copper mean and the Nickel mean etc. The positive
solution of the equation given by 
\begin{equation*}
x^{2}-px-q=0,
\end{equation*}%
for some positive integer $p$ and $q$, is called a $(p,q)$-metallic number 
\cite{V.W1,V.W3} and it has the form%
\begin{equation*}
\sigma _{p,q}=\frac{p+\sqrt{p^{2}+4q}}{2}.
\end{equation*}%
For $p=q=1$ and $p=2,$ $q=1$, it is well-known that we have the Golden mean $%
\phi =\frac{1+\sqrt{5}}{2}$ and Silver mean $\sigma _{2,1}=1+\sqrt{2}$,
respectively$.$ The metallic mean family plays an important role to
establish a relationship between mathematics and architecture. For example,
Golden mean and Silver mean can be seen in the sacred art of Egypt, Turkey,
India, China and other ancient civilizations \cite{V.W5}.

Polynomial structures on manifolds were introduced by S. I. Goldberg, K.
Yano and N. C. Petridis in (\cite{GY} and \cite{GY1}). C. E. Hretcanu and M.
Crasmareanu defined Golden structure as a particular case of polynomial
structure \cite{GH,GH2,GH3} and some generalizations of this, called
metallic structure \cite{GCS}. Being inspired by the metallic mean, the
notion of metallic manifold $\breve{N}$ was defined in \cite{CM} by a $(1,1)$%
-tensor field $\breve{J}$ on $\breve{N},$ which satisfies $\breve{J}^{2}=p%
\breve{J}+qI$, where $I$ is the identity operator on $\Gamma (T \breve{N})$
and $p$, $q$ are fixed positive integer numbers. Moreover, if $(\breve{N},%
\breve{g})$ is a Riemannian manifold endowed with a metallic structure $%
\breve{J}$ such that the Riemannian metric $\breve{g}$ is $\breve{J}$%
-compatible, i.e., $\breve{g}(\breve{J}V,W)=\breve{g}(V,\breve{J}W),$ for
any $V,W\in \Gamma (T\breve{N})$, then $(\breve{g},\breve{J})$ is called
metallic Riemannian structure and $(\breve{N},\breve{g},\breve{J})$ is a
metallic Riemannian manifold. Metallic structure on the ambient Riemannian
manifold provides important geometrical results on the submanifolds, since
it is an important tool while investigating the geometry of submanifolds.
Invariant, anti-invariant, semi-invariant, slant, semi-slant and hemi-slant
submanifolds of a metallic Riemannian manifold were studied in \cite%
{MH,MH1,MH2,MH3} and the authors obtained important characterizations on
submanifolds of metallic Riemannian manifolds. One of the most important
subclass of metallic Riemannian manifolds consists of the Golden Riemannian
manifolds. In recent years, many authors have studied Golden Riemannian
manifolds and their submanifolds (see \cite{BM,EC,M,FC3}). N. Poyraz \"{O}%
nen and E. Ya\c{s}ar \cite{NE} initiated the study of lightlike geometry in
Golden semi-Riemannian manifolds, by investigating lightlike hypersurfaces
of Golden semi-Riemannian manifolds. B. E. Acet introduced lightlike
hypersurfaces of a metallic semi-Riemannian manifold \cite{arxiv-eftal}.
Transversal lightlike submanifolds in metallic semi-Riemannian manifolds
were firstly studied by F. E. Erdo\u{g}an \cite{arxiv-feyza}.

In the present paper, we introduce screen transversal lightlike submanifolds
of metallic semi-Riemannian manifolds with its subclasses, namely screen
transversal anti-invariant, radical screen transversal and isotropic screen
transversal lightlike submanifolds, and give an example. We show that there
do not exist co-isotropic and totally screen transversal type of screen
transversal anti-invariant lightlike submanifolds of a metallic
semi-Riemannian manifold. We investigate the geometry of distributions
involved in the definition of such submanifolds and find necessary and
sufficient conditions for the induced connection to be a metric connection.
Furthermore, we give a necessary and sufficient condition for an isotropic
screen transversal lightlike submanifold to be totally geodesic.

\section{Preliminaries}

A submanifold $\acute{N}^{m}$ immersed in a semi-Riemannian manifold $(%
\breve{N}^{m+k},\breve{g})$ is called a lightlike submanifold if it admits a
degenerate metric $g$ induced from $\breve{g},$ whose radical distribution $%
Rad(T\acute{N})$ is of rank $r$, where $1\leq r\leq m.$ Then $Rad(T\acute{N}%
)=T\acute{N}\cap T\acute{N}^{\perp }$, where 
\begin{equation}
T\acute{N}^{\perp }=\cup _{x\in \acute{N}}\left\{ u\in T_{x}\breve{N}\mid 
\breve{g}\left( u,v\right) =0,\forall v\in T_{x}\acute{N}\right\} .
\end{equation}%
Let $S(T\acute{N})$ be a screen distribution which is a semi-Riemannian
complementary distribution of $Rad(T\acute{N})$ in $T\acute{N}$, i.e., $T%
\acute{N}=Rad(T\acute{N})\oplus_{ort} S(T\acute{N}).$

We consider a screen transversal vector bundle $S(T\acute{N}^{\bot }),$
which is a semi-Rie\-mannian complementary vector bundle of $Rad(T\acute{N})$
in $T\acute{N}^{\bot }.$ For any local basis $\left\{ \xi _{i}\right\} $ of $%
Rad(T\acute{N})$, there exists a lightlike transversal vector bundle $ltr(T%
\acute{N})$ locally spanned by $\left\{ N_{i}\right\} $ \cite{DB}. Let $tr(T%
\acute{N})$ be complementary (but not orthogonal) vector bundle to $T\acute{N%
}$ in $T\breve{N}^{\perp }|_{\acute{N}}$. Then we have%
\begin{eqnarray*}
tr(T\acute{N}) &=&ltr(T\acute{N})\,\oplus_{ort} \,S(T\acute{N}^{\bot }), \\
T\breve{N}\,|\,_{\acute{N}} &=&S(T\acute{N})\,\oplus_{ort} \,[Rad(T\acute{N}%
)\oplus ltr(T\acute{N})]\oplus_{ort} S(T\acute{N}^{\bot }).
\end{eqnarray*}%
Although $S(T\acute{N})$ is not unique, it is canonically isomorphic to the
factor vector bundle $T\acute{N}/Rad(T\acute{N})$ \cite{DB}.

Note that the lightlike second fundamental forms of a lightlike submanifold $%
\acute{N}$ do not depend on $S(T\acute{N}),$ $S(T\acute{N}^{\perp })$ and $%
ltr(T\acute{N})$ \cite{DB}.

We say that a submanifold $(\acute{N},g,S(T\acute{N}),$ $S(T\acute{N}^{\perp
}))$ of $\breve{N}$ is

Case 1: \ $r$-lightlike if $r<\min \{m,k\};$

Case 2: \ Co-isotropic if $r=k<m;S(T\acute{N}^{\perp })=\{0\};$

Case 3: \ Isotropic if $r=m<k;$ $S(T\acute{N})=\{0\};$

Case 4: \ Totally lightlike if $r=k=m;$ $S(T\acute{N})=\{0\}=S(T\acute{N}%
^{\perp }).$

The Gauss and Weingarten equations are given by%
\begin{eqnarray}
\breve{\nabla}_{W}U &=&\nabla _{W}U+h\left( W,U\right),\quad \forall W,U\in
\Gamma (T\acute{N}),  \label{7} \\
\breve{\nabla}_{W}V &=&-A_{V}W+\nabla _{W}^{t}V,\quad \forall W\in \Gamma (T%
\acute{N}),V\in \Gamma (tr(T\acute{N})),  \label{8}
\end{eqnarray}%
where $\left\{ \nabla _{W}U,A_{V}W\right\} $ and $\left\{ h\left( W,U\right)
,\nabla _{W}^{t}V\right\} $ belong to $\Gamma (T\acute{N})$ and $\Gamma (tr(T%
\acute{N})),$ respectively. Here, $\nabla $ and $\nabla ^{t}$ denote linear
connections on $\acute{N}$ and the vector bundle $tr\,(T\acute{N})$,
respectively. Also, for any $W,U\in $ \text{$\Gamma (T\acute{N})$}, $N\in $ $%
\Gamma (ltr(T\acute{N}))$ and $Z\in $ $\Gamma (S(T\acute{N}^{\bot }))$, we
have%
\begin{eqnarray}
\breve{\nabla}_{W}U &=&\nabla _{W}U+h^{\ell }\left( {\ W,U}\right)
+h^{s}\left( {\ W,U}\right),  \label{9} \\
\breve{\nabla}_{W}N &=&-A_{N}W+\nabla _{W}^{\ell }N+D^{s}\left( {\ W,N}%
\right),  \label{10} \\
\breve{\nabla}_{W}Z &=&-A_{Z}W+\nabla _{W}^{s}Z+D^{\ell }\left( {\ W,Z}%
\right).  \label{11}
\end{eqnarray}

Let $P$ denotes the projection of $T\acute{N}$ on $S(T\acute{N}).$ Since $%
\breve{\nabla}$ is a metric connection, then by using (\ref{7}), (\ref{9})-(%
\ref{11}) we get%
\begin{eqnarray}
\breve{g}(h^{s}\left( {\ W,U}\right) ,Z)+\breve{g}({\ U,D}^{\ell }\left( {\
W,Z}\right) ) &=&\breve{g}\left( {\ A}_{Z}{\ W,U}\right) ,  \label{12} \\
\breve{g}\left( {\ D}^{s}\left( {\ W,N}\right) {\ ,Z}\right) &=&\breve{g}%
\left( {\ N,A}_{Z}{\ W}\right) .  \label{13}
\end{eqnarray}%
From the decomposition of the tangent bundle of a lightlike submanifold, we
write%
\begin{eqnarray}
\nabla _{W}PU &=&\nabla _{W}^{\ast }PU+h^{\ast }\left( W,PU\right) ,
\label{14} \\
\nabla _{W}\xi &=&-A_{\xi }^{\ast }W+\nabla _{W}^{\ast t}\xi ,  \label{15}
\end{eqnarray}%
for $W,U\in \Gamma (T\acute{N})$ and $\xi \in \Gamma (Rad(T\acute{N})),$
which imply%
\begin{eqnarray}
g\left( h^{\ell }\left( W,PU\right) ,\xi \right) &=&g\left( A_{\xi }^{\ast
}W,PU\right) ,  \label{16} \\
g\left( h^{s}\left( W,PU\right) ,N\right) &=&g\left( A_{N}W,PU\right) ,
\label{17} \\
g\left( h^{\ell }\left( W,\xi \right) ,\xi \right) &=&0,\quad A_{\xi }^{\ast
}\xi =0.  \label{18}
\end{eqnarray}%
In general, the induced connection $\nabla $ on $\acute{N}$ is not a metric
connection. Since $\breve{\nabla}$ is a metric connection, by using (\ref{9}%
) we get%
\begin{equation}
\left( \nabla _{W}g\right) \left( U,V\right) {\ =g}\left( h^{\ell }\left(
W,U\right) ,V\right) {\ +g}\left( h^{\ell }\left( W,V\right) ,U\right) .
\label{19}
\end{equation}%
However, we note that $\nabla ^{\ast }$ is a metric connection on $S(T\acute{%
N})$.

The positive solution of the equation 
\begin{equation*}
x^{2}-px-q=0,
\end{equation*}%
for fixed two positive integers $p$ and $q,$ is called a member of metallic
means family (\cite{V.W1}-\cite{V.W5}). These numbers, given by%
\begin{equation}
\sigma _{p,q}=\frac{p+\sqrt{p^{2}+4q}}{2},  \label{19a}
\end{equation}%
are called $(p,q)$-metallic numbers.

\begin{definition}
\cite{CM} A polynomial structure on a manifold $\breve{N}$ is called a
metallic structure if it is determined by an $(1,1)$-tensor field $\breve{J} 
$ which satisfies \ 
\begin{equation}
\breve{J}^{2}=p\breve{J}+qI,  \label{20}
\end{equation}%
where $I$ is the identity map on $\Gamma (T\breve{N})$ and $p,q$ are
positive integers. Also, if 
\begin{equation}
\breve{g}(\breve{J}W,U)=\breve{g}(W,\breve{J}U)  \label{21}
\end{equation}%
\newline
holds for every $U,W\in \Gamma (T\breve{N})$, then the semi-Riemannian
metric $\breve{g}$ is called $\breve{J}$-compatible. In this case, $(\breve{N%
},\breve{g},\breve{J})$ is called a metallic semi-Riemannian manifold.
Furthermore, a metallic semi-Riemannian structure $\breve{J}$ is called a
locally metallic structure if $\breve{J}$ is parallel with respect to the
Levi-Civita connection $\breve{\nabla},$ that is 
\begin{equation}
\breve{\nabla}_{W}\breve{J}U=\breve{J}\breve{\nabla}_{W}U.  \label{21a}
\end{equation}
\end{definition}

If $\breve{J}$ is a metallic structure, then (\ref{21}) is equivalent to 
\cite{CM} 
\begin{equation}
\breve{g}(\breve{J}W,\breve{J}U)=p\breve{g}(\breve{J}W,U)+q\breve{g}(W,U),
\label{23}
\end{equation}%
for any $W,U\in \Gamma (T\breve{N}).$

It is known, from \cite{GY1}, that a polynomial structure on a manifold $%
\breve{N}$ defined by a smooth tensor field of type $(1,1)$ induces a
generalized almost product structure $F$, i.e., $F^{2}=I$, on $\breve{N}$
with number of distributions of $F$ equal to the number of distinct
irreducible factors of the structure polynomial over the real field while
the projectors are expressed as polynomials in $F$.

\begin{proposition}
\cite{CM} Every almost product structure $F$ induces two metallic structures
on $\breve{N}$ given as follows:%
\begin{equation*}
\breve{J}_{1}=\frac{p}{2}I+\left( \frac{2\sigma _{p,q}-p}{2}\right) F,\text{
\ \ \ \ \ }\breve{J}_{2}=\frac{p}{2}I-\left( \frac{2\sigma _{p,q}-p}{2}%
\right) F.
\end{equation*}%
Conversely, every metallic structure $\breve{J}$ on $\breve{N}$ induces two
almost product structures on this manifold:%
\begin{equation*}
F=\pm \left( \frac{2}{2\sigma _{p,q}-p}\breve{J}-\frac{p}{2\sigma _{p,q}-p}%
I\right) .
\end{equation*}
\end{proposition}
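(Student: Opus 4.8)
The plan is a purely algebraic verification; there is essentially no geometric content, and the single fact that drives everything is that, by (\ref{19a}), the metallic number $\sigma_{p,q}$ is a root of $x^{2}-px-q=0$. Hence $\sigma_{p,q}^{2}=p\sigma_{p,q}+q$, and therefore
\[
(2\sigma_{p,q}-p)^{2}=4\sigma_{p,q}^{2}-4p\sigma_{p,q}+p^{2}=4\big(p\sigma_{p,q}+q\big)-4p\sigma_{p,q}+p^{2}=p^{2}+4q .
\]
I would record this identity first, since it is the only step that is not a one-line expansion; it also shows $2\sigma_{p,q}-p=\sqrt{p^{2}+4q}\neq 0$, which legitimizes the divisions occurring in the converse.

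For the direct statement, I would write $\breve{J}_{1}=aI+bF$ with $a=\tfrac{p}{2}$ and $b=\tfrac{2\sigma_{p,q}-p}{2}$, so that $b^{2}=\tfrac14(p^{2}+4q)$ by the identity above. Using $F^{2}=I$ one computes $\breve{J}_{1}^{2}=(a^{2}+b^{2})I+2abF$, while $p\breve{J}_{1}+qI=(pa+q)I+pbF$. Comparing the $F$-components gives $2ab=pb$, which holds since $2a=p$; comparing the $I$-components gives $a^{2}+b^{2}=pa+q$, which holds since $a^{2}+b^{2}=\tfrac14 p^{2}+\tfrac14(p^{2}+4q)=\tfrac12 p^{2}+q=pa+q$. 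Thus $\breve{J}_{1}^{2}=p\breve{J}_{1}+qI$, i.e. $\breve{J}_{1}$ is a metallic structure; since $b$ enters $\breve{J}_{2}^{2}=(aI-bF)^{2}$ only through $b^{2}$, the very same two comparisons yield $\breve{J}_{2}^{2}=p\breve{J}_{2}+qI$. If moreover $\breve{g}$ satisfies $\breve{g}(FW,U)=\breve{g}(W,FU)$, then each $\breve{J}_{i}$, being a real-linear combination of $I$ and $F$, automatically fulfils (\ref{21}), so $(\breve{N},\breve{g},\breve{J}_{i})$ is a metallic semi-Riemannian manifold.

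For the converse, suppose $\breve{J}^{2}=p\breve{J}+qI$ and set $F=\varepsilon\,\tfrac{1}{2\sigma_{p,q}-p}\big(2\breve{J}-pI\big)$ with $\varepsilon=\pm1$, which is exactly the displayed expression $F=\pm\big(\tfrac{2}{2\sigma_{p,q}-p}\breve{J}-\tfrac{p}{2\sigma_{p,q}-p}I\big)$. Then
\[
F^{2}=\frac{1}{(2\sigma_{p,q}-p)^{2}}\big(4\breve{J}^{2}-4p\breve{J}+p^{2}I\big)=\frac{1}{(2\sigma_{p,q}-p)^{2}}\big(4(p\breve{J}+qI)-4p\breve{J}+p^{2}I\big)=\frac{p^{2}+4q}{(2\sigma_{p,q}-p)^{2}}\,I ,
\]
and this equals $I$ by $(2\sigma_{p,q}-p)^{2}=p^{2}+4q$. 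Hence $F$ is an almost product structure for either sign, which produces the two announced structures.

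The only point that requires care is the identity $(2\sigma_{p,q}-p)^{2}=p^{2}+4q$ and the resulting nonvanishing of $2\sigma_{p,q}-p$; once this is in place, both implications reduce to the three expansions above, so I do not expect any real obstacle, and the final write-up should consist of little more than those displays.
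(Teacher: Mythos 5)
Your verification is correct: the key identity $(2\sigma_{p,q}-p)^{2}=p^{2}+4q$ follows from $\sigma_{p,q}^{2}=p\sigma_{p,q}+q$, and the three expansions you perform establish both directions without any gap. The paper itself gives no proof of this proposition (it is quoted from the cited reference \cite{CM}), and your algebraic argument is exactly the standard one used there, so nothing further is needed.
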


\section{Screen Transversal Lightlike Submanifolds of Metallic
Semi-Riemannian Manifolds}

In this section, before introducing a screen transversal lightlike
submanifold of metallic semi-Riemannian manifolds, we begin with the
following.

\begin{lemma}
Let $\acute{N}$ be a lightlike submanifold of a metallic semi-Riemannian
manifold $(\breve{N},\breve{g},\breve{J})$ with a vector subbundle $ltr(T%
\acute{N})$ of the screen transversal vector bundle. Then we have%
\begin{equation*}
\breve{J}Rad(T\acute{N})\cap \breve{J}ltr(T\acute{N})=\{0\}.
\end{equation*}
\end{lemma}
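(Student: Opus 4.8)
The plan is to argue by contradiction, supposing there exists a nonzero vector
$X \in \breve{J}\,Rad(T\acute{N}) \cap \breve{J}\,ltr(T\acute{N})$. Then we can
write $X = \breve{J}\xi = \breve{J}N$ for some $\xi \in \Gamma(Rad(T\acute{N}))$
and some $N \in \Gamma(ltr(T\acute{N}))$, both nonzero, hence $\breve{J}(\xi - N)=0$.
Applying $\breve{J}$ once more and using the metallic relation \eqref{20},
$\breve{J}^{2}=p\breve{J}+qI$, gives $p\,\breve{J}(\xi-N) + q(\xi - N) = 0$, and since
$\breve{J}(\xi-N)=0$ this forces $q(\xi - N) = 0$. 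Because $q$ is a positive integer,
we conclude $\xi = N$. But $\xi \in \Gamma(Rad(T\acute{N}))$ while
$N \in \Gamma(ltr(T\acute{N}))$, and $Rad(T\acute{N}) \cap ltr(T\acute{N}) = \{0\}$
by the very construction of the lightlike transversal bundle (the direct sum
decomposition $Rad(T\acute{N}) \oplus ltr(T\acute{N})$ recalled in the Preliminaries);
hence $\xi = N = 0$, so $X = \breve{J}\xi = 0$, contradicting $X \neq 0$.

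An alternative and perhaps cleaner route avoids the contradiction setup: take any
$X$ in the intersection, write $X = \breve{J}\xi = \breve{J}N$ as above, so that
$\breve{J}(\xi - N) = 0$; then $0 = \breve{J}^{2}(\xi - N) = p\,\breve{J}(\xi - N) + q(\xi - N) = q(\xi - N)$,
whence $\xi = N \in Rad(T\acute{N}) \cap ltr(T\acute{N}) = \{0\}$ and therefore
$X = \breve{J}\xi = 0$. Either way the argument is short and purely algebraic once
the metallic identity is invoked.

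The only subtle point — what I expect to be the "main obstacle," though it is
minor — is making sure the hypothesis that $ltr(T\acute{N})$ is a subbundle of the
screen transversal bundle $S(T\acute{N}^{\perp})$ is actually used, or understanding
why the lemma is stated under that hypothesis at all. In the core computation above
only the facts $\breve{J}^{2}=p\breve{J}+qI$ with $q>0$ and
$Rad(T\acute{N})\cap ltr(T\acute{N})=\{0\}$ are needed; the placement of $ltr(T\acute{N})$
inside $S(T\acute{N}^{\perp})$ seems to be context for the subsequent development
(the definition of screen transversal lightlike submanifolds) rather than a genuine
ingredient here. I would therefore present the short algebraic proof and, if desired,
remark that the hypothesis on $ltr(T\acute{N})$ is not essential for this particular
identity. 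No nontrivial estimates or case analysis are required.
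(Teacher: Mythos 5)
Your proof is correct, but it is genuinely different from the one in the paper. You use only the polynomial identity $\breve{J}^{2}=p\breve{J}+qI$ with $q>0$, which makes $\breve{J}$ injective (indeed invertible, with $\breve{J}^{-1}=\frac{1}{q}(\breve{J}-pI)$), so that the images $\breve{J}Rad(T\acute{N})$ and $\breve{J}ltr(T\acute{N})$ can only meet trivially because $Rad(T\acute{N})\oplus ltr(T\acute{N})$ is a direct sum. This is a purely algebraic, metric-free argument that proves exactly the stated identity, and your closing observation is also accurate: the hypothesis placing $ltr(T\acute{N})$ (or rather its $\breve{J}$-image) inside the screen transversal bundle plays no role in it. The paper instead argues via the metric: using $\breve{g}(N,\xi)=1$ and the compatibility relation (\ref{23}) it rules out, case by case, that $\breve{J}N$ lies in $ltr(T\acute{N})$, $S(T\acute{N})$ or $Rad(T\acute{N})$, and concludes $\breve{J}N\in\Gamma(S(T\acute{N}^{\perp}))$. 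Strictly speaking the paper's argument establishes the positional statement $\breve{J}ltr(T\acute{N})\subset S(T\acute{N}^{\perp})$ — the fact actually needed for the decomposition of $S(T\acute{N}^{\perp})$ used in the sequel — rather than the displayed intersection identity, which your argument delivers directly and more cleanly. In short: your route is shorter and proves precisely what is claimed; the paper's route is longer and metric-dependent, but yields the extra information about where $\breve{J}ltr(T\acute{N})$ sits, which motivates Definition \ref{def1}.
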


\begin{proof}
Assume that $ltr(T\acute{N})$ is invariant with respect to $\breve{J},$ i.e., $%
\breve{J}ltr(T\acute{N})=ltr(T\acute{N}).$ From the definition of a lightlike
submanifold, we have
\begin{equation}
\breve{g}(N,\xi )=1,  \label{a}
\end{equation}%
for $\xi \in \Gamma (Rad(T\acute{N}))$ and $N\in \Gamma (ltr(T\acute{N})).$ Also
from (\ref{23}), we find that%
\begin{equation*}
\breve{g}(\breve{J}N,\breve{J}\xi )=p+q.
\end{equation*}%
However, since $\breve{J}N\in \Gamma (ltr(T\acute{N})),$ then by hypothesis,
we get $\breve{g}(\breve{J}N,\breve{J}\xi )=0,$ which is a contradiction. So
$\breve{J}N$ can not belong to $\Gamma (ltr(T\acute{N}))$.

Now, let us consider $\breve{J}N\in \Gamma (S(T\acute{N})).$ Then we obtain
$\breve{g}(\breve{J}N,\breve{J}\xi )=0$, which contradicts (\ref{a}).
When we assume $\breve{J}N\in \Gamma (Rad(T\acute{N}))$,
we get the same contradiction. Thus, $\breve{J}N$ does not belong to $S(T\acute{N})$ as well as
to $Rad(T\acute{N}).$ Then, from the decomposition of a lightlike submanifold, we
conclude that $\breve{J}N\in \Gamma (S(T\acute{N}^{\perp })).$ This
completes the proof.
\end{proof}

\begin{definition}
\label{def1}Let $\acute{N}$ be a lightlike submanifold of a metallic
semi-Riemannian manifold $(\breve{N},\breve{g},\breve{J}).$ If 
\begin{equation*}
\breve{J}Rad(T\acute{N})\subset S(T\acute{N}^{\perp }),
\end{equation*}%
then $\acute{N}$ is called a screen transversal lightlike submanifold of a
metallic semi-Riemannian manifold.
\end{definition}

\begin{example}
Let $\left( \breve{N}=\mathbb{R}_{2}^{5},\breve{g},\breve{J}\right) $ be the
five-dimensional semi-Euclidean space with the semi-Euclidean metric $\breve{%
g}$ of sign $(-,-,+,+,+).$ If we take 
\begin{equation*}
\breve{J}(x_{1},x_{2},x_{3},x_{4},x_{5})=(\left( p-\sigma \right)
x_{1},\left( p-\sigma \right) x_{2},\sigma x_{3},\sigma x_{4},\sigma x_{5}),
\end{equation*}
where $(x_{1},x_{2},x_{3},x_{4},x_{5})$ is the standard coordinate system of 
$\mathbb{R}_{2}^{5}$, then one can easily see that $\breve{J}$ is a metallic
structure on $\mathbb{R}_{2}^{5}.$ Let $\acute{N}$ be a submanifold in $%
\breve{N}$ defined by 
\begin{equation*}
x_{3}=0,\quad x_{5}=x_{1}+x_{2}.
\end{equation*}%
Then we get $T\acute{N}=Sp\{W_{1},W_{2},W_{3}\},$ for%
\begin{equation*}
W_{1}=\frac{\partial }{\partial x_{1}}+\frac{\partial }{\partial x_{5}},%
\text{ \ }W_{2}=\frac{\partial }{\partial x_{2}}+\frac{\partial }{\partial
x_{5}},\text{ \ }W_{3}=\frac{\partial }{\partial x_{4}}.
\end{equation*}%
It is easy to check that $\acute{N}$ is a lightlike submanifold. Therefore,%
\begin{eqnarray*}
Rad(T\acute{N}) &=&Sp\{W_{1}=\xi \}, \\
ltr(T\acute{N}) &=&Sp\left\{ N=\frac{1}{2}\left( -\frac{\partial }{\partial
x_{1}}+\frac{\partial }{\partial x_{5}}\right) \right\} , \\
S(T\acute{N}) &=&Sp\{W_{4}\},
\end{eqnarray*}%
and we have%
\begin{eqnarray*}
\breve{J}\xi &=&\left( p-\sigma \right) \frac{\partial }{\partial x_{1}}%
+\sigma \frac{\partial }{\partial x_{5}}\in S(T\acute{N}^{\perp }), \\
\breve{J}N &=&\frac{1}{2}\left( -\left( p-\sigma \right) \frac{\partial }{%
\partial x_{1}}+\sigma \frac{\partial }{\partial x_{5}}\right) \in S(T\acute{%
N}^{\perp }).
\end{eqnarray*}%
Thus, $\acute{N}$ is a radical screen transversal lightlike submanifold of $%
\breve{N}.$
\end{example}

\begin{definition}
Let $\acute{N}$ be a screen transversal lightlike submanifold of a metallic
semi-Riemannian manifold $(\breve{N},\breve{g},\breve{J}).$
\end{definition}

\begin{enumerate}
\item If $\breve{J}S(T\acute{N})\subset S(T\acute{N}^{\perp }),$ then we say
that $\acute{N}$ is a screen transversal anti-invariant submanifold of $(%
\breve{N},\breve{g},\breve{J}).$

\item If $\breve{J}S(T\acute{N})=S(T\acute{N}),$ then we say that $\acute{N}$
is a radical screen transversal lightlike submanifold of $(\breve{N},\breve{g%
},\breve{J}).$
\end{enumerate}

\begin{remark}
Let $\acute{N}$ be a lightlike submanifold of a metallic semi-Riemannian
manifold $(\breve{N},\breve{g},\breve{J}).$ Considering the definition of a
lightlike submanifold, we note the followings \cite{DS}:
\end{remark}

\begin{description}
\item[(i)] the radical distribution $Rad(T\acute{N})$ is integrable (resp.,
defines a totally geodesic foliation) if and only if 
\begin{equation}
g\left( \left[ W,U\right] ,Z\right) =0,\quad (resp.,\quad \breve{g}\left(
\nabla _{W}U,Z\right) =0),  \label{int-geo-1}
\end{equation}%
for $W,U\in \Gamma (Rad(T\acute{N}))$ and $Z\in \Gamma (S(T\acute{N})).$

\item[(ii)] the screen distribution $S(T\acute{N})$ is integrable (resp.,
defines a totally geodesic foliation) if and only if 
\begin{equation}
g\left( \left[ W,U\right] ,N\right) =0,\quad (resp.,\quad \breve{g}\left(
\nabla _{W}U,N\right) =0),  \label{int-geo-2}
\end{equation}%
for $W,U\in \Gamma (S(T\acute{N}))$ and $N\in \Gamma (ltr(T\acute{N})).$
\end{description}

\subsection{Screen Transversal Anti-Invariant Submanifolds}

Let $\acute{N}$ be a screen transversal anti-invariant submanifold of $(%
\breve{N},\breve{g},\breve{J})$. Then we have following decomposition: 
\begin{equation*}
S(T\acute{N}^{\perp })=\{\breve{J}Rad(T\acute{N})\oplus \breve{J}ltr(T\acute{%
N})\oplus \breve{J}S(T\acute{N})\}\oplus_{ort} D_{o},
\end{equation*}%
where, $D_{o}$ is a non-degenerate distribution orthogonal complement to $%
\breve{J}Rad(T\acute{N})\oplus \breve{J}ltr(T\acute{N})\oplus \breve{J}S(T%
\acute{N})$.

\begin{proposition}
\label{prop2}Let $\acute{N}$ be a screen transversal anti-invariant
lightlike submanifold of a metallic semi-Riemannian manifold $(\breve{N},%
\breve{g},\breve{J})$. Then the distribution $D_{o}$ is invariant with
respect to $\breve{J}.$
\end{proposition}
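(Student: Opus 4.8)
The plan is to take an arbitrary $X\in\Gamma(D_{o})$ and verify that $\breve{J}X$ is $\breve{g}$-orthogonal to every summand appearing in the two orthogonal decompositions
\begin{equation*}
T\breve{N}|_{\acute{N}}=S(T\acute{N})\oplus_{ort}[Rad(T\acute{N})\oplus ltr(T\acute{N})]\oplus_{ort}S(T\acute{N}^{\perp}),
\end{equation*}
\begin{equation*}
S(T\acute{N}^{\perp})=\{\breve{J}Rad(T\acute{N})\oplus\breve{J}ltr(T\acute{N})\oplus\breve{J}S(T\acute{N})\}\oplus_{ort}D_{o},
\end{equation*}
except for $D_{o}$ itself. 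Since $D_{o}$ is non-degenerate, this forces $\breve{J}X\in\Gamma(D_{o})$, which is the claim. The only inputs needed are the $\breve{J}$-compatibility (\ref{21}), its consequence (\ref{23}), the preceding Lemma (to locate $\breve{J}ltr(T\acute{N})$ inside $S(T\acute{N}^{\perp})$), and the defining inclusions of a screen transversal anti-invariant submanifold.

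First I would unpack what membership in $D_{o}$ gives: because $D_{o}\subset S(T\acute{N}^{\perp})$, the field $X$ is orthogonal to $S(T\acute{N})$, to $Rad(T\acute{N})$ and to $ltr(T\acute{N})$; and because $D_{o}$ is the orthogonal complement of $\breve{J}Rad(T\acute{N})\oplus\breve{J}ltr(T\acute{N})\oplus\breve{J}S(T\acute{N})$ inside $S(T\acute{N}^{\perp})$, it is also orthogonal to each of these three. Then, for $\xi\in\Gamma(Rad(T\acute{N}))$, $N\in\Gamma(ltr(T\acute{N}))$ and $Z\in\Gamma(S(T\acute{N}))$, (\ref{21}) gives $\breve{g}(\breve{J}X,\xi)=\breve{g}(X,\breve{J}\xi)$, $\breve{g}(\breve{J}X,N)=\breve{g}(X,\breve{J}N)$ and $\breve{g}(\breve{J}X,Z)=\breve{g}(X,\breve{J}Z)$. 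Here $\breve{J}\xi\in\Gamma(\breve{J}Rad(T\acute{N}))$, $\breve{J}N\in\Gamma(\breve{J}ltr(T\acute{N}))$ by the Lemma, and $\breve{J}Z\in\Gamma(\breve{J}S(T\acute{N}))$ by the anti-invariance hypothesis, so all three products vanish. Consequently $\breve{J}X$ is orthogonal to $S(T\acute{N})$, $Rad(T\acute{N})$ and $ltr(T\acute{N})$, hence $\breve{J}X\in\Gamma(S(T\acute{N}^{\perp}))$.

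Next I would rule out the remaining three summands of $S(T\acute{N}^{\perp})$. Applying (\ref{23}), $\breve{g}(\breve{J}X,\breve{J}\xi)=p\,\breve{g}(\breve{J}X,\xi)+q\,\breve{g}(X,\xi)$; the first term was just shown to be zero, and the second vanishes because $X\in S(T\acute{N}^{\perp})$ is orthogonal to $Rad(T\acute{N})$. The same computation with $N$ and with $Z$ in place of $\xi$ shows $\breve{g}(\breve{J}X,\breve{J}N)=0$ and $\breve{g}(\breve{J}X,\breve{J}Z)=0$. Thus $\breve{J}X\in\Gamma(S(T\acute{N}^{\perp}))$ is orthogonal to $\breve{J}Rad(T\acute{N})\oplus\breve{J}ltr(T\acute{N})\oplus\breve{J}S(T\acute{N})$, and therefore $\breve{J}X\in\Gamma(D_{o})$, completing the argument.

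I do not expect a serious obstacle here; the proof is essentially bookkeeping with (\ref{21}) and (\ref{23}) against the orthogonal decomposition. The one point to handle carefully is that $Rad(T\acute{N})$ and $ltr(T\acute{N})$ are each degenerate, so the conclusion that the $Rad(T\acute{N})\oplus ltr(T\acute{N})$-component of $\breve{J}X$ vanishes must be obtained from orthogonality to \emph{both} factors separately (it is the $Rad$--$ltr$ pairing that is non-degenerate). It is also worth flagging in the write-up that the anti-invariance assumption $\breve{J}S(T\acute{N})\subset S(T\acute{N}^{\perp})$ enters precisely in the computations involving $Z\in\Gamma(S(T\acute{N}))$, so the statement genuinely uses that hypothesis and not merely the screen transversal condition.
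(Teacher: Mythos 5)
Your proof is correct and takes essentially the same route as the paper's: for $W\in\Gamma(D_{o})$ one checks, via (\ref{21}) and (\ref{23}), that $\breve{J}W$ is $\breve{g}$-orthogonal to $Rad(T\acute{N})$, $ltr(T\acute{N})$, $S(T\acute{N})$ and to $\breve{J}Rad(T\acute{N})\oplus\breve{J}ltr(T\acute{N})\oplus\breve{J}S(T\acute{N})$, forcing $\breve{J}W\in\Gamma(D_{o})$. Your write-up is in fact slightly more careful than the paper's (which omits the coefficients $p,q$ in the expansion of $\breve{g}(\breve{J}W,\breve{J}\xi)$ and does not spell out the non-degeneracy needed to conclude), but the underlying argument is identical.
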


\begin{proof}
Using (\ref{21}), we obtain%
\begin{equation*}
\breve{g}(\breve{J}W,\xi )=\breve{g}(W,\breve{J}\xi )=0,
\end{equation*}%
which implies that $\breve{J}U$ does not belong to $\Gamma(ltr(T\acute{N})).$ Since we
have%
\begin{eqnarray*}
\breve{g}(\breve{J}W,N) &=&\breve{g}(W,\breve{J}N)=0, \\
\breve{g}(\breve{J}W,\breve{J}\xi ) &=&\breve{g}(W,\breve{J}\xi )+\breve{g}%
(W,\xi )=0, \\
\breve{g}(\breve{J}W,\breve{J}N) &=&0, \\
\breve{g}(\breve{J}W,U) &=&\breve{g}(W,\breve{J}U)=0, \\
\breve{g}(\breve{J}W,\breve{J}U) &=&0
\end{eqnarray*}%
for $W\in \Gamma \left( D_{o}\right) ,$ $\xi \in \Gamma (Rad(T\acute{N})),$ $%
N\in \Gamma (ltr(T\acute{N}))$ and $U\in \Gamma (S(T\acute{N})),$ then we
complete the proof.
\end{proof}

\begin{proposition}
Let $\acute{N}$ be a screen transversal anti-invariant lightlike submanifold
of a metallic semi-Riemannian manifold $(\breve{N},\breve{g},\breve{J})$.
Then there do not exist co-isotropic and totally screen transversal type of
such lightlike submanifolds.
\end{proposition}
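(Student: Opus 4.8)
The plan is to exploit the single fact that the metallic structure $\breve{J}$ is nowhere singular, so it cannot annihilate the (non-trivial) radical distribution; since the screen transversal hypothesis forces $\breve{J}Rad(T\acute{N})$ to sit inside $S(T\acute{N}^{\perp})$, the latter bundle must be non-trivial, while both the co-isotropic and the totally lightlike cases demand it to vanish.

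\textbf{Step 1 (invertibility of $\breve{J}$).} I would first record that $\breve{J}$ restricts to a fibrewise isomorphism of $T\breve{N}$. From (\ref{20}) we have $\breve{J}\circ \tfrac{1}{q}(\breve{J}-pI)=I$, which makes sense precisely because $q$ is a positive integer; equivalently, if $\breve{J}\xi =0$ then $0=\breve{J}^{2}\xi =p\breve{J}\xi +q\xi =q\xi$, hence $\xi =0$. Consequently $\breve{J}$ restricts to a bundle monomorphism on $Rad(T\acute{N})$, and since $\operatorname{rank}Rad(T\acute{N})=r\geq 1$, the image $\breve{J}Rad(T\acute{N})$ is a subbundle of rank $r\geq 1$; in particular $\breve{J}Rad(T\acute{N})\neq \{0\}$.

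\textbf{Step 2 (locating $\breve{J}Rad(T\acute{N})$).} Since $\acute{N}$ is, by hypothesis, a screen transversal anti-invariant lightlike submanifold, Definition \ref{def1} gives $\breve{J}Rad(T\acute{N})\subset S(T\acute{N}^{\perp})$. Combined with Step 1 this yields $S(T\acute{N}^{\perp})\neq \{0\}$.

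\textbf{Step 3 (contradiction).} Finally I would recall from the classification of lightlike submanifolds that a co-isotropic submanifold (Case 2) and a totally lightlike submanifold (Case 4) are both characterised, among other things, by $S(T\acute{N}^{\perp})=\{0\}$. This contradicts Step 2, so neither type can occur. I expect essentially no obstacle here: the only point requiring a word of care is the non-singularity of $\breve{J}$ --- which is exactly where the positivity of $q$ enters, and for a general polynomial structure whose structure polynomial has $0$ as a root the argument would break --- together with the standard remark that a fibrewise invertible $(1,1)$-tensor sends a rank-$r$ subbundle to a rank-$r$ subbundle. I would also note in passing that the anti-invariance condition $\breve{J}S(T\acute{N})\subset S(T\acute{N}^{\perp})$ is not actually used: the obstruction is carried entirely by the radical distribution.
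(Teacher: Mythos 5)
Your proposal is correct and follows essentially the same route as the paper: both arguments observe that the co-isotropic and totally lightlike cases force $S(T\acute{N}^{\perp})=\{0\}$, which is incompatible with the requirement $\breve{J}Rad(T\acute{N})\subset S(T\acute{N}^{\perp})$ from Definition \ref{def1}. The paper simply declares this ``trivial,'' whereas you usefully make explicit the one non-obvious ingredient --- the invertibility of $\breve{J}$ (via $\breve{J}^{-1}=\tfrac{1}{q}(\breve{J}-pI)$, using $q>0$), which guarantees $\breve{J}Rad(T\acute{N})\neq\{0\}$.
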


\begin{proof}
If $\acute{N}$ is a co-isotropic or totally screen transversal lightlike
submanifold, then we have
\begin{equation*}
S(T\acute{N}^{\perp })=\{0\}.
\end{equation*}%
Therefore, from Definition \ref{def1}, the proof is trivial.
\end{proof}

Assume that $\breve{N}$ is a screen transversal anti-invariant submanifold
of a metallic semi-Riemannian manifold $(\breve{N},\breve{g},\breve{J})$.
Let $\ T_{1},T_{2},T_{3},$ and $T_{4\text{ }}$ be the projection morphisms
on $\breve{J}Rad(T\acute{N}),$ $\breve{J}S(T\acute{N}),$ $\breve{J}ltr(T%
\acute{N}),$ and $D_{o},$ respectively. Then, for $U\in \Gamma (S(T\acute{N}%
^{\perp })),$ we have expression 
\begin{equation}
U=T_{1}U+T_{2}U+T_{3}U+T_{4\text{ }}U.  \label{26}
\end{equation}%
If we apply $\breve{J}$ to (\ref{26}), then we find 
\begin{equation}
\breve{J}U=\breve{J}T_{1}U+\breve{J}T_{2}U+\breve{J}T_{3}U+\breve{J}T_{4%
\text{ }}U.  \label{27}
\end{equation}%
On the other hand, we have%
\begin{equation}
\breve{J}U=BU+CU,  \label{26a}
\end{equation}%
for $U\in \Gamma (S(T\acute{N}^{\perp })),$ where, $BU$ and $CU$ are the
tangent and transversal components of $\breve{J}U$, respectively.

Also, let $R$ and $R^{\prime }$ be the projection morphisms of $\breve{J}%
T_{1}U$ on $Rad(T\acute{N})$ and $\breve{J}Rad(T\acute{N})$, respectively$;$ 
$S$ and $S^{\prime }$ be the projection morphisms of $\breve{J}T_{2}U$ on $%
S(T\acute{N})$ and $\breve{J}S(T\acute{N})$, respectively; $L$ and $%
L^{\prime }$ be the projection morphisms of $\breve{J}T_{3}U$ on $ltr(T%
\acute{N})$ and $\breve{J}ltr(T\acute{N})$, respectively$;$ $D$ and $%
D^{\prime }$ be the projection morphisms of $\breve{J}T_{4\text{ }}U$ on $%
D_{o}$ and $\breve{J}D_{o}$, respectively$.$ Then, from (\ref{27}) and (\ref%
{26a}), we have 
\begin{eqnarray*}
BU &=&R\breve{J}T_{1}U+S\breve{J}T_{2}U, \\
CU &=&R^{\prime }\breve{J}T_{1}U+S^{\prime }\breve{J}T_{2}U+L\breve{J}T_{3}U
\\
&&+L^{\prime }\breve{J}T_{3}U+D\breve{J}T_{4\text{ }}U+D^{\prime }\breve{J}%
T_{4\text{ }}U.
\end{eqnarray*}%
If we put $B_{1}=R\breve{J}T_{1},$ $B_{2}=S\breve{J}T_{2},$ $C_{1}=L\breve{J}%
T_{3},$ and $C_{2}=$ $R^{\prime }\breve{J}T_{1}+S^{\prime }\breve{J}T_{2}+L%
\breve{J}T_{3}+L^{\prime }\breve{J}T_{3}+D\breve{J}T_{4\text{ }}+D^{\prime }%
\breve{J}T_{4\text{ }},$ then we can rewrite (\ref{27}) as%
\begin{equation}
\breve{J}U=B_{1}U+B_{2}U+C_{1}U+C_{2}U.  \label{28}
\end{equation}%
Here there are components of $B_{1}U$, $B_{2}U$, $C_{1}U$, and $C_{2}U$ at$\
\Gamma (Rad(T\acute{N}))$, $\Gamma (S(T\acute{N}))$, $\Gamma (ltr(T\acute{N}%
))$, and $\Gamma (S(T\acute{N}^{\perp }))$, respectively, namely $\breve{J}U$
belongs to $T\breve{N}\,|_{\acute{N}}$.

It is known that the induced connection on a screen transversal
anti-invariant lightlike submanifold immersed in metallic semi-Riemannian
manifolds is not a metric connection. The condition under which the induced
connection on the submanifold would be a metric connection is given by the
following theorem.

\begin{theorem}
Let $\acute{N}$ be a screen transversal anti-invariant lightlike submanifold
of a locally metallic semi-Riemannian manifold $(\breve{N},\breve{g},\breve{J%
})$. Then the induced connection $\nabla $ on $\acute{N}$ is a metric
connection if and only if 
\begin{equation*}
B_{2}\nabla _{W}^{s}\breve{J}\xi =0,
\end{equation*}%
for $W\in \Gamma (T\acute{N})$ and $\xi \in \Gamma (Rad(T\acute{N})).$
\end{theorem}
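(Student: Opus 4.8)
The plan is to use the well-known characterization that the induced connection $\nabla$ on a lightlike submanifold is a metric connection if and only if $Rad(T\acute{N})$ is parallel with respect to $\nabla$, equivalently $\nabla_W\xi\in\Gamma(Rad(T\acute{N}))$ for all $W\in\Gamma(T\acute{N})$ and $\xi\in\Gamma(Rad(T\acute{N}))$. (This follows from formula (\ref{19}): $(\nabla_W g)(U,V)=g(h^\ell(W,U),V)+g(h^\ell(W,V),U)$ together with $h^\ell$ being valued in $ltr(T\acute{N})$ and the fact that $h^\ell(W,\cdot)$ vanishes on $Rad(T\acute{N})$ precisely when $\nabla_W\xi$ has no component in $S(T\acute{N})$.) So it suffices to translate the condition ``$\nabla_W\xi$ has vanishing $S(T\acute{N})$-component'' into the stated condition on $B_2$.

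First I would start from $\breve{J}\xi\in\Gamma(S(T\acute{N}^\perp))$, which holds by Definition \ref{def1} since $\acute{N}$ is screen transversal. Applying the Weingarten-type equation (\ref{11}) for screen transversal vector fields, I write
\begin{equation*}
\breve{\nabla}_W\breve{J}\xi=-A_{\breve{J}\xi}W+\nabla_W^s\breve{J}\xi+D^\ell(W,\breve{J}\xi).
\end{equation*}
On the other hand, since $\breve{N}$ is \emph{locally} metallic, (\ref{21a}) gives $\breve{\nabla}_W\breve{J}\xi=\breve{J}\breve{\nabla}_W\xi$, and expanding $\breve{\nabla}_W\xi=\nabla_W\xi+h^\ell(W,\xi)+h^s(W,\xi)$ via (\ref{9}) yields
\begin{equation*}
\breve{J}\breve{\nabla}_W\xi=\breve{J}\nabla_W\xi+\breve{J}h^\ell(W,\xi)+\breve{J}h^s(W,\xi).
\end{equation*}
Now I would decompose each term using the operators defined in (\ref{28}): for the tangential piece $\breve{J}\nabla_W\xi$ the relevant fact is that $B_2$ extracts exactly the $S(T\acute{N})$-component of $\breve{J}(\cdot)$ when $(\cdot)$ is a screen transversal vector; more precisely, by definition $B_2=S\breve{J}T_2$ picks out the $S(T\acute{N})$ part coming from the $\breve{J}S(T\acute{N})$-summand of a screen transversal vector. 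Comparing the $S(T\acute{N})$-components on both sides of $\breve{\nabla}_W\breve{J}\xi=\breve{J}\breve{\nabla}_W\xi$: the left side contributes $-P(A_{\breve{J}\xi}W)$ plus the $S(T\acute{N})$-component of $D^\ell(W,\breve{J}\xi)$, but $D^\ell$ is $ltr$-valued and $A_{\breve{J}\xi}W$ is tangent, so after projecting onto $S(T\acute{N})$ one isolates $B_2\nabla_W^s\breve{J}\xi$ as the term that must match the $S(T\acute{N})$-component of $\breve{J}\nabla_W\xi$ (after canceling the $\breve{J}h^\ell, \breve{J}h^s$ contributions, which lie in $S(T\acute{N}^\perp)\oplus ltr(T\acute{N})$ by the screen transversal structure and Proposition \ref{prop2}).

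The key step, and the main obstacle, is the bookkeeping of which summand each image term lands in: one must carefully check that $\breve{J}h^\ell(W,\xi)$ and $\breve{J}h^s(W,\xi)$ have no $S(T\acute{N})$-component (using that $h^\ell$ is $ltr$-valued and $\breve{J}$ maps $ltr(T\acute{N})$ into $S(T\acute{N}^\perp)$ by Lemma 3.1, while $h^s$ is $S(T\acute{N}^\perp)$-valued with $\breve{J}$-image split between $S(T\acute{N}^\perp)$ and the invariant $D_o$ plus possibly $ltr$-directions but \emph{not} $S(T\acute{N})$—this is where Proposition \ref{prop2} and the anti-invariant hypothesis $\breve{J}S(T\acute{N})\subset S(T\acute{N}^\perp)$ are essential). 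Granting this, the $S(T\acute{N})$-component of $\breve{J}\breve{\nabla}_W\xi$ equals the $S(T\acute{N})$-component of $\breve{J}\nabla_W\xi$, and since $\breve{J}$ restricted to $Rad(T\acute{N})$ is screen-transversal-valued, the $S(T\acute{N})$-component of $\breve{J}\nabla_W\xi$ vanishes iff the $S(T\acute{N})$-component of $\nabla_W\xi$ vanishes. Matching this against the right-hand side's contribution $B_2\nabla_W^s\breve{J}\xi$ gives the equivalence: $\nabla_W\xi\in\Gamma(Rad(T\acute{N}))$ for all $W$ iff $B_2\nabla_W^s\breve{J}\xi=0$ for all $W\in\Gamma(T\acute{N})$ and $\xi\in\Gamma(Rad(T\acute{N}))$, which by the characterization above is equivalent to $\nabla$ being a metric connection. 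I would close by invoking (\ref{19}) once more to confirm $(\nabla_W g)(U,V)=0$ for all $U,V$ under this condition.
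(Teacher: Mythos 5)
Your overall strategy (reduce the metric-connection condition to $\nabla_W\xi\in\Gamma(Rad(T\acute{N}))$, then use local metallicity together with the Gauss and Weingarten formulas (\ref{9}) and (\ref{11})) matches the paper's, but the step where you extract the condition $B_2\nabla_W^s\breve{J}\xi=0$ contains a genuine error. By its definition via (\ref{28}), $B_2\nabla_W^s\breve{J}\xi$ is the $S(T\acute{N})$-component of $\breve{J}\nabla_W^s\breve{J}\xi$ --- not of $\nabla_W^s\breve{J}\xi$ itself, which lies entirely in $S(T\acute{N}^\perp)$ and therefore has zero $S(T\acute{N})$-component. So comparing $S(T\acute{N})$-components of $\breve{\nabla}_W\breve{J}\xi=\breve{J}\breve{\nabla}_W\xi$ as you propose cannot make the term $B_2\nabla_W^s\breve{J}\xi$ appear at all. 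Worse, your pivotal claim that ``the $S(T\acute{N})$-component of $\breve{J}\nabla_W\xi$ vanishes iff the $S(T\acute{N})$-component of $\nabla_W\xi$ vanishes'' is false for a screen transversal anti-invariant submanifold: here $\breve{J}$ maps both $Rad(T\acute{N})$ and $S(T\acute{N})$, hence all of $T\acute{N}$, into $S(T\acute{N}^\perp)$, so $\breve{J}\nabla_W\xi$ never has an $S(T\acute{N})$-component, whatever $\nabla_W\xi$ is. For the same reason your assertion that $\breve{J}h^s(W,\xi)$ has no $S(T\acute{N})$-component is wrong in general: that component is exactly $B_2h^s(W,\xi)$, and since $\breve{J}(\breve{J}Z)=p\breve{J}Z+qZ$ for $Z\in\Gamma(S(T\acute{N}))$, the image under $\breve{J}$ of a screen transversal vector can certainly have a nonzero part in $S(T\acute{N})$.

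The missing idea is a second application of $\breve{J}$ combined with the metallic identity $\breve{J}^2=p\breve{J}+qI$ from (\ref{20}), which is the only mechanism that converts statements about $\breve{J}$-images back into statements about the vectors themselves. The paper applies $\breve{J}$ to the identity $-A_{\breve{J}\xi}W+\nabla_W^s\breve{J}\xi+D^{\ell}(W,\breve{J}\xi)=\breve{J}(\nabla_W\xi+h^{\ell}(W,\xi)+h^s(W,\xi))$, uses (\ref{20}) on the right-hand side so that the term $q\nabla_W\xi$ appears, decomposes $\breve{J}\nabla_W^s\breve{J}\xi$ by (\ref{28}), and equates tangential parts to obtain $\nabla_W\xi=\frac{1}{q}\left(B_1\nabla_W^s\breve{J}\xi+B_2\nabla_W^s\breve{J}\xi\right)$; since $B_1$ takes values in $Rad(T\acute{N})$ and $B_2$ in $S(T\acute{N})$, the condition $\nabla_W\xi\in\Gamma(Rad(T\acute{N}))$ is then equivalent to $B_2\nabla_W^s\breve{J}\xi=0$. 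Without this step your argument does not close.
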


\begin{proof}
From (\ref{21a}), for $W\in \Gamma (T\acute{N})$ and $\xi \in \Gamma (Rad(T%
\acute{N})),$ we have%
\begin{equation*}
\breve{\nabla}_{W}\breve{J}\xi =\breve{J}\breve{\nabla}_{W}\xi .
\end{equation*}%
If we use (\ref{9}) and (\ref{11}), then we get%
\begin{equation*}
-A_{\breve{J}\xi }W+\nabla _{W}^{s}\breve{J}\xi +D^{l}(W,\breve{J}\xi )=%
\breve{J}\left( \nabla _{W}\xi +h^{l}(W,\xi \right) +h^{s}(W,\xi )).
\end{equation*}%
Applying $\breve{J}$ to above equation, we find
\begin{equation*}
-\breve{J}A_{\breve{J}\xi }W+\breve{J}\nabla _{W}^{s}\breve{J}\xi +\breve{J}%
D^{l}(W,\breve{J}\xi ){ =\breve{J}}^{2}(\nabla _{W}\xi +h^{l}(W,\xi )%
{ +h}^{s}{ (W,\xi ))}.
\end{equation*}%
Then from (\ref{20}), we obtain%
\begin{equation*}
-\breve{J}A_{\breve{J}\xi }W+\breve{J}\nabla _{W}^{s}\breve{J}\xi +\breve{J}%
D^{l}(W,\breve{J}\xi )=\left(
\begin{array}{c}
p\breve{J}\nabla _{W}\xi +p\breve{J}h^{l}(W,\xi ) \\
+p\breve{J}{ h}^{s}{ (W,\xi )}+q\nabla _{W}\xi  \\
+qh^{l}(W,\xi )+q{ h}^{s}{ (W,\xi )}%
\end{array}%
\right) .
\end{equation*}%
If we use (\ref{28}) in last equation above, we can write%
\begin{equation*}
\left(
\begin{array}{c}
-\breve{J}A_{\breve{J}\xi }W+B_{1}\nabla _{W}^{s}\breve{J}\xi  \\
+B_{2}\nabla _{W}^{s}\breve{J}\xi +C_{1}\nabla _{W}^{s}\breve{J}\xi  \\
+C_{2}\nabla _{W}^{s}\breve{J}\xi +\breve{J}D^{l}\left( W,\breve{J}\xi
\right)
\end{array}%
\right) =\left(
\begin{array}{c}
p\breve{J}\nabla _{W}\xi +p\breve{J}h^{l}(W,\xi ) \\
+p\breve{J}{ h}^{s}{ (W,\xi )}+q\nabla _{W}\xi  \\
+qh^{l}(W,\xi )+q{ h}^{s}{ (W,\xi )}%
\end{array}%
\right) .
\end{equation*}%
By equating the tangent parts of the last equation, we have
\begin{equation*}
\frac{1}{q}(B_{1}\nabla _{W}^{s}\breve{J}\xi +B_{2}\nabla _{W}^{s}\breve{J}%
\xi )=\nabla _{W}\xi .
\end{equation*}%
Hence, the proof is completed.
\end{proof}

\begin{theorem}
Let $\acute{N}$ be a screen transversal anti-invariant lightlike submanifold
of a locally metallic semi-Riemannian manifold $(\breve{N},\breve{g},\breve{J%
})$. Then the radical distribution is integrable if and only if 
\begin{equation*}
\nabla _{W}^{s}\breve{J}V=\nabla _{V}^{s}\breve{J}W,
\end{equation*}%
for $V,W\in \Gamma (Rad(T\acute{N})).$
\end{theorem}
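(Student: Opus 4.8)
The plan is to exploit the parallelism of $\breve{J}$ granted by the locally metallic hypothesis, push it through the Gauss and Weingarten formulas, and then extract the asserted identity by comparing components in the orthogonal splitting of $T\breve{N}|_{\acute{N}}$. Fix $W,V\in\Gamma(Rad(T\acute{N}))$. Since $\acute{N}$ is screen transversal anti-invariant, $\breve{J}W,\breve{J}V\in\Gamma(S(T\acute{N}^{\perp}))$, so applying $(\ref{21a})$ to $\breve{J}V$ and expanding the left-hand side by $(\ref{11})$ and the right-hand side by $(\ref{9})$ gives
\begin{equation*}
-A_{\breve{J}V}W+\nabla_{W}^{s}\breve{J}V+D^{\ell}(W,\breve{J}V)=\breve{J}\nabla_{W}V+\breve{J}h^{\ell}(W,V)+\breve{J}h^{s}(W,V).
\end{equation*}

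Next I would write the same relation with $W$ and $V$ interchanged and subtract. Because $\breve{\nabla}$ is torsion-free, the induced second fundamental forms $h^{\ell},h^{s}$ are symmetric and the induced connection $\nabla$ is torsion-free, so the $\breve{J}h^{\ell}$ and $\breve{J}h^{s}$ terms cancel while $\breve{J}\nabla_{W}V-\breve{J}\nabla_{V}W=\breve{J}[W,V]$. This leaves
\begin{equation*}
\left(A_{\breve{J}W}V-A_{\breve{J}V}W\right)+\left(\nabla_{W}^{s}\breve{J}V-\nabla_{V}^{s}\breve{J}W\right)+\left(D^{\ell}(W,\breve{J}V)-D^{\ell}(V,\breve{J}W)\right)=\breve{J}[W,V].
\end{equation*}
Now I would place each term: the shape-operator difference lies in $\Gamma(T\acute{N})$, the $D^{\ell}$-difference in $\Gamma(ltr(T\acute{N}))$, and $\nabla_{W}^{s}\breve{J}V-\nabla_{V}^{s}\breve{J}W$ in $\Gamma(S(T\acute{N}^{\perp}))$; on the other side $[W,V]\in\Gamma(T\acute{N})=\Gamma(Rad(T\acute{N})\oplus_{ort}S(T\acute{N}))$, and since $\breve{J}Rad(T\acute{N})\subset S(T\acute{N}^{\perp})$ and, by anti-invariance, $\breve{J}S(T\acute{N})\subset S(T\acute{N}^{\perp})$, we get $\breve{J}[W,V]\in\Gamma(S(T\acute{N}^{\perp}))$. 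Comparing components then yields $A_{\breve{J}W}V=A_{\breve{J}V}W$, $D^{\ell}(W,\breve{J}V)=D^{\ell}(V,\breve{J}W)$, and the crucial identity
\begin{equation*}
\nabla_{W}^{s}\breve{J}V-\nabla_{V}^{s}\breve{J}W=\breve{J}[W,V].
\end{equation*}

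To finish I would use that $\breve{J}$ is invertible: from $(\ref{20})$, $\breve{J}(\breve{J}-pI)=qI$ with $q>0$, hence $\breve{J}$ is a bundle isomorphism and $[W,V]=\tfrac{1}{q}(\breve{J}-pI)(\nabla_{W}^{s}\breve{J}V-\nabla_{V}^{s}\breve{J}W)$. If $\nabla_{W}^{s}\breve{J}V=\nabla_{V}^{s}\breve{J}W$ then $[W,V]=0\in\Gamma(Rad(T\acute{N}))$, so $Rad(T\acute{N})$ is integrable. For the converse I would invoke the integrability criterion $(\ref{int-geo-1})$: $Rad(T\acute{N})$ is integrable iff $g([W,V],Z)=0$ for all $Z\in\Gamma(S(T\acute{N}))$; pairing the crucial identity with $Z$, using $\breve{J}$-compatibility $(\ref{21})$ together with $\nabla_{W}^{s}\breve{J}V-\nabla_{V}^{s}\breve{J}W\perp T\acute{N}$ and $\breve{J}Z\in\Gamma(S(T\acute{N}^{\perp}))$, one gets $q\,g([W,V],Z)=\breve{g}(\nabla_{W}^{s}\breve{J}V-\nabla_{V}^{s}\breve{J}W,\breve{J}Z)$, and since $\breve{J}[W,V]$ already lies in $\breve{J}Rad(T\acute{N})\oplus\breve{J}S(T\acute{N})$ and $\breve{J}S(T\acute{N})$ is non-degenerate, the vanishing of this pairing for all such $Z$ is exactly $\nabla_{W}^{s}\breve{J}V=\nabla_{V}^{s}\breve{J}W$.

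The routine part is the algebra; what needs genuine care is twofold. First, the component bookkeeping in the middle step: one must keep strict track of which summand of the orthogonal decomposition of $T\breve{N}|_{\acute{N}}$ each of the several terms belongs to, and it is precisely the screen transversal anti-invariant hypothesis — forcing $\breve{J}[W,V]$ to land wholly inside $S(T\acute{N}^{\perp})$ — that makes the component comparison collapse to the clean identity $\nabla_{W}^{s}\breve{J}V-\nabla_{V}^{s}\breve{J}W=\breve{J}[W,V]$. Second, the passage from this identity back to integrability genuinely relies on the invertibility of $\breve{J}$ (coming from $(\ref{20})$) together with the integrability criterion of the Remark; without the invertibility one controls only $\breve{J}[W,V]$ rather than $[W,V]$ itself, so this final identification is the real obstacle rather than the preceding computation.
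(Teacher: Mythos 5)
Your derivation of the vectorial identity $\nabla_{W}^{s}\breve{J}V-\nabla_{V}^{s}\breve{J}W=\breve{J}[W,V]$ is correct, and it is sharper than what the paper does: the paper never leaves the scalar level, using (\ref{23}) to write $q\,g([W,V],Z)$ as $\breve{g}(\breve{\nabla}_{W}\breve{J}V,\breve{J}Z)-p\breve{g}(\breve{\nabla}_{W}V,\breve{J}Z)-\breve{g}(\breve{\nabla}_{V}\breve{J}W,\breve{J}Z)+p\breve{g}(\breve{\nabla}_{V}W,\breve{J}Z)$ and concluding that integrability is equivalent to $\breve{g}(\nabla_{W}^{s}\breve{J}V-\nabla_{V}^{s}\breve{J}W,\breve{J}Z)=0$ for all $Z\in\Gamma(S(T\acute{N}))$. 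Your ``if'' direction is fine: the displayed condition forces $\breve{J}[W,V]=0$, hence $[W,V]=0$ by invertibility of $\breve{J}$, hence integrability.

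The gap is in the converse, exactly at the step you yourself flagged as delicate. Vanishing of $\breve{g}(\nabla_{W}^{s}\breve{J}V-\nabla_{V}^{s}\breve{J}W,\breve{J}Z)=q\,g([W,V],Z)$ for all $Z\in\Gamma(S(T\acute{N}))$ kills only the $\breve{J}S(T\acute{N})$-component of $\breve{J}[W,V]$; it says nothing about the $\breve{J}Rad(T\acute{N})$-component, and $\breve{J}Rad(T\acute{N})$ is orthogonal to $\breve{J}S(T\acute{N})$ because $\breve{g}(\breve{J}\xi,\breve{J}Z)=p\breve{g}(\breve{J}\xi,Z)+q\breve{g}(\xi,Z)=0$. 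So integrability yields only $\nabla_{W}^{s}\breve{J}V-\nabla_{V}^{s}\breve{J}W=\breve{J}[W,V]\in\Gamma(\breve{J}Rad(T\acute{N}))$, not $=0$. Indeed your own identity shows that the displayed condition is equivalent to $[W,V]=0$, which is strictly stronger than $[W,V]\in\Gamma(Rad(T\acute{N}))$: taking $V=fW$ with $W(f)\neq 0$ gives $\nabla_{W}^{s}\breve{J}(fW)-\nabla_{fW}^{s}\breve{J}W=W(f)\breve{J}W\neq 0$ even though such brackets never obstruct integrability. The equivalence that can actually be proved --- and the one the paper's computation establishes --- is that $Rad(T\acute{N})$ is integrable if and only if $\nabla_{W}^{s}\breve{J}V-\nabla_{V}^{s}\breve{J}W$ has no component in $\breve{J}S(T\acute{N})$, i.e.\ $\breve{g}(\nabla_{W}^{s}\breve{J}V-\nabla_{V}^{s}\breve{J}W,\breve{J}Z)=0$ for all $Z\in\Gamma(S(T\acute{N}))$. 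The theorem's displayed equation has to be read in that weakened, component-wise sense; as a literal identity its necessity fails, and no repair of your final non-degeneracy argument can recover it.
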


\begin{proof}
From (\ref{int-geo-1}), we get
\begin{eqnarray*}
{ 0}{ =g} &&(\breve{\nabla}_{W}\breve{J}V,\breve{J}Z){ -pg(%
}\breve{\nabla}_{W}V,\breve{J}Z) \\
&&{ -g(}\breve{\nabla}_{V}\breve{J}W,\breve{J}Z){ +pg(}\breve{%
\nabla}_{V}W,\breve{J}Z),
\end{eqnarray*}%
for $V,W\in \Gamma (Rad(T\acute{N}))$ and $Z\in \Gamma (S(T\acute{N})).$ Since
$\breve{J}U,\breve{J}W\in \Gamma (S(T\acute{N}^{\perp })),$ then by using (%
\ref{10}), we find%
\begin{equation*}
0=g(\nabla _{W}^{s}\breve{J}V-\nabla _{V}^{s}\breve{J}W,\breve{J}Z).
\end{equation*}%
\end{proof}

\begin{theorem}
Let $\acute{N}$ be a screen transversal anti-invariant lightlike submanifold
of a locally metallic semi-Riemannian manifold $(\breve{N},\breve{g},\breve{J%
})$. In this case, the screen distribution\ is integrable if and only if 
\begin{equation*}
\nabla _{W}^{s}\breve{J}U=\nabla _{U}^{s}\breve{J}W,
\end{equation*}%
for $W,U\in \Gamma (S(T\acute{N})).$
\end{theorem}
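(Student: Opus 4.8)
The plan is to mimic the structure of the two preceding integrability theorems, using the criterion (\ref{int-geo-2}) for the screen distribution together with the parallelism (\ref{21a}) of $\breve{J}$ and the relation (\ref{23}). First I would fix $W,U\in\Gamma(S(T\acute{N}))$ and $N\in\Gamma(ltr(T\acute{N}))$, and start from $g([W,U],N)=\breve{g}(\breve{\nabla}_{W}U-\breve{\nabla}_{U}W,N)$. The idea is to replace $N$ by $\breve{J}N$; since $\acute{N}$ is screen transversal anti-invariant we have $\breve{J}S(T\acute{N})\subset S(T\acute{N}^{\perp})$ and $\breve{J}N\in\Gamma(S(T\acute{N}^{\perp}))$ (by the Lemma), so $\breve{g}(\breve{\nabla}_{W}U,\breve{J}N)$ will pick out only the screen-transversal piece of $\breve{\nabla}_{W}U$, namely $\nabla^{s}_{W}(\cdot)$-type terms after we push $\breve{J}$ through.

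The key computation is to express $\breve{g}(\breve{\nabla}_{W}U,N)$ in terms of $\breve{J}$-images. Using (\ref{23}) with the substitution $\breve{g}(\breve{\nabla}_{W}U,N)=\frac{1}{q}\big(\breve{g}(\breve{J}\breve{\nabla}_{W}U,\breve{J}N)-p\,\breve{g}(\breve{J}\breve{\nabla}_{W}U,N)\big)$, then applying (\ref{21a}) to write $\breve{J}\breve{\nabla}_{W}U=\breve{\nabla}_{W}\breve{J}U$, and noting $\breve{J}U\in\Gamma(S(T\acute{N}^{\perp}))$ so that (\ref{11}) gives $\breve{\nabla}_{W}\breve{J}U=-A_{\breve{J}U}W+\nabla^{s}_{W}\breve{J}U+D^{\ell}(W,\breve{J}U)$. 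Pairing this against $\breve{J}N\in\Gamma(S(T\acute{N}^{\perp}))$ kills the $A$-term (tangent) and the $D^{\ell}$-term (lightlike transversal), leaving $\breve{g}(\nabla^{s}_{W}\breve{J}U,\breve{J}N)$; similarly the term $\breve{g}(\breve{\nabla}_{W}\breve{J}U,N)$ reduces via (\ref{13})-type pairings. Subtracting the analogous expression with $W,U$ interchanged, everything except $\breve{g}(\nabla^{s}_{W}\breve{J}U-\nabla^{s}_{U}\breve{J}W,\breve{J}N)$ should cancel, because the potentially obstructive terms ($A$-operators, $D^{\ell}$, and the $p$-correction terms) enter symmetrically in $W$ and $U$ once one uses the torsion-free property of $\breve{\nabla}$ and $\breve{g}(\breve{J}U,\breve{J}N)=\breve{g}(U,\breve{J}N)+$ lower-order, which vanish since $U,\breve{J}N$ and $\breve{J}U,N$ lie in complementary bundles. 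Finally, since $\breve{J}$ is nondegenerate on $S(T\acute{N}^{\perp})$ and $\breve{J}N$ ranges over a subbundle pairing nontrivially with $\nabla^{s}_{W}\breve{J}U$, one concludes $g([W,U],N)=0$ for all such $N$ precisely when $\nabla^{s}_{W}\breve{J}U=\nabla^{s}_{U}\breve{J}W$.

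The main obstacle I expect is bookkeeping: making sure that after substituting $\breve{J}$ everywhere via (\ref{23}) and (\ref{21a}), the cross terms $\breve{g}(A_{\breve{J}U}W,\breve{J}N)$, $\breve{g}(D^{\ell}(W,\breve{J}U),\breve{J}N)$, and the $p$-weighted pieces $p\,\breve{g}(\breve{\nabla}_{W}\breve{J}U,N)$ genuinely cancel against their $U\leftrightarrow W$ counterparts rather than contributing a spurious term. The cleanest route is probably to avoid expanding $\breve{J}\breve{\nabla}_{W}U$ via $\breve{J}^{2}=p\breve{J}+qI$ on the submanifold side (as was done in the metric-connection theorem) and instead stay on the ambient side, using only $\breve{\nabla}\breve{g}=0$, $\breve{\nabla}_{W}\breve{J}=\breve{J}\breve{\nabla}_{W}$, and the orthogonality decompositions, so that the identity $g([W,U],N)=\frac{1}{q}\,\breve{g}\big(\nabla^{s}_{W}\breve{J}U-\nabla^{s}_{U}\breve{J}W,\breve{J}N\big)$ (up to terms manifestly symmetric in $W,U$) falls out directly. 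One small point to check carefully is that $\breve{J}N$ indeed lies in $S(T\acute{N}^{\perp})$ — this is exactly the content of the Lemma — and that the pairing $\breve{g}(\nabla^{s}_{W}\breve{J}U,\breve{J}N)$ detects all of $\nabla^{s}_{W}\breve{J}U$ relevant to the vanishing, which follows since $\breve{J}$ restricted to $S(T\acute{N}^{\perp})$ is an isomorphism by (\ref{20}) and nondegeneracy of $\breve{g}$ on that bundle.
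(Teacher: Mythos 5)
Your proposal is correct and follows essentially the same route as the paper's proof: both express $q\,\breve{g}([W,U],N)$ via (\ref{23}) and (\ref{21a}) as $\breve{g}(\breve{\nabla}_{W}\breve{J}U,\breve{J}N)-p\breve{g}(\breve{\nabla}_{W}U,\breve{J}N)$ minus its $W\leftrightarrow U$ counterpart, then use $\breve{J}U,\breve{J}N\in\Gamma(S(T\acute{N}^{\perp}))$ with (\ref{11}) and (\ref{9}) to reduce this to $\breve{g}(\nabla^{s}_{W}\breve{J}U-\nabla^{s}_{U}\breve{J}W,\breve{J}N)$, the $p$-terms cancelling by the symmetry of $h^{s}$ (equivalently, by torsion-freeness as you suggest). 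The cancellations you flag as potential obstacles do go through exactly as you anticipate, so no changes are needed.
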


\begin{proof}
By using (\ref{int-geo-2}), from (\ref{21a}), (\ref{21}) and (\ref{23}), we
find%
\begin{eqnarray*}
{ 0} &{ =}&{ g(}\breve{\nabla}_{W}\breve{J}U,\breve{J}N)%
{ -pg(}\breve{\nabla}_{W}U,\breve{J}N) \\
&&{ -g(}\breve{\nabla}_{U}\breve{J}W,\breve{J}N){ +pg(}\breve{%
\nabla}_{U}W,\breve{J}N) \\
&{ =}&{ g(}\nabla _{W}^{s}\breve{J}U,\breve{J}N){ -g(}%
\nabla _{U}^{s}\breve{J}W,\breve{J}N) \\
&&{ -g(}ph^{s}(W,U),\breve{J}N){ +g(}ph^{s}(U,W),\breve{J}N)%
{ ,}
\end{eqnarray*}%
for $W,U\in \Gamma (S(T\acute{N}))$ and $N\in \Gamma (ltr(T\acute{N})).$ The
last equation implies%
\begin{equation*}
\nabla _{W}^{s}\breve{J}U-\nabla _{U}^{s}\breve{J}W=p\left(
h^{s}(W,U)-h^{s}(U,W)\right) .
\end{equation*}%
Since $h^{s}$ is symmetric, we get
$
\nabla _{W}^{s}\breve{J}U=\nabla _{U}^{s}\breve{J}W.
$
\end{proof}

\begin{theorem}
Let $\acute{N}$ be a radical screen transversal lightlike submanifold of a
locally metallic semi-Riemannian manifold $(\breve{N},\breve{g},\breve{J})$.
Then the radical distribution defines a totally geodesic foliation if and
only if there is no component of $\nabla _{W}^{s}\breve{J}U-ph^{s}(W,U)$ in $%
S(T\acute{N}),$ for $W,U\in \Gamma (Rad(T\acute{N})).$
\end{theorem}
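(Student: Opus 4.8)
The plan is to translate the foliation condition from Remark (i) into the metallic setting, using the fact that $\acute N$ is a \emph{radical} screen transversal submanifold, so that $\breve J\,Rad(T\acute N)\subset S(T\acute N^\perp)$ while $\breve J\,S(T\acute N)=S(T\acute N)$. By that remark, $Rad(T\acute N)$ defines a totally geodesic foliation if and only if $\breve g(\nabla_W U,Z)=0$ for all $W,U\in\Gamma(Rad(T\acute N))$ and $Z\in\Gamma(S(T\acute N))$; since the ambient connection is metric and $\breve J$ is an isomorphism fixing $S(T\acute N)$, this is equivalent to $\breve g(\breve\nabla_W U,\breve J Z)=0$, and I would rewrite $\breve g(\breve\nabla_W U,\breve J Z)$ using $\breve J$-compatibility \eqref{21} and the relation \eqref{23} so that everything is expressed in terms of $\breve J U$ rather than $U$.

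The key computational step is to start from $\breve g(\breve\nabla_W \breve J U,\breve J Z)$ and expand via \eqref{23}:
\begin{equation*}
\breve g(\breve\nabla_W \breve J U,\breve J Z)=p\,\breve g(\breve\nabla_W \breve J U,Z)+q\,\breve g(\breve\nabla_W U,Z),
\end{equation*}
after first moving $\breve J$ across $\breve\nabla$ using the locally metallic condition \eqref{21a}. Then I would apply the Gauss–Weingarten formulas: since $\breve J U\in\Gamma(S(T\acute N^\perp))$ for $U\in\Gamma(Rad(T\acute N))$, equation \eqref{11} gives $\breve\nabla_W \breve J U=-A_{\breve J U}W+\nabla_W^s \breve J U+D^\ell(W,\breve J U)$, while $\breve\nabla_W U=\nabla_W U+h^\ell(W,U)+h^s(W,U)$ from \eqref{9}. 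Pairing these with $\breve J Z\in\Gamma(S(T\acute N))$ and using that $D^\ell(W,\breve J U)\in\Gamma(ltr(T\acute N))$ and $h^\ell(W,U)\in\Gamma(ltr(T\acute N))$ are orthogonal to $S(T\acute N)$, the surviving terms come from $\nabla_W^s\breve J U$ and from $ph^s(W,U)$, giving an identity of the shape
\begin{equation*}
q\,\breve g(\nabla_W U,Z)=\breve g\bigl(\nabla_W^s\breve J U-p\,h^s(W,U),\,\breve J Z\bigr)\quad(\text{up to terms that vanish}),
\end{equation*}
so that $\breve g(\nabla_W U,Z)=0$ for all such $Z$ exactly when $\nabla_W^s\breve J U-p\,h^s(W,U)$ has no $S(T\acute N)$-component.

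The main obstacle I anticipate is bookkeeping: one must be careful about \emph{which} bundle each term lands in and about using \eqref{12}--\eqref{13} or \eqref{17} to discard the $A_{\breve J U}W$ and $h^s$-type contributions that are not automatically orthogonal to $S(T\acute N)$. In particular one needs that $\breve g(A_{\breve J U}W,\breve J Z)$ can be re-expressed via \eqref{12} in terms of $h^s$ and $D^\ell$ so it either cancels or gets absorbed, and that $h^s(W,U)$ for radical $W,U$ interacts correctly with $\breve J Z\in S(T\acute N)$; since $\breve J Z$ ranges over all of $S(T\acute N)$ as $Z$ does, the condition "$\breve g(\,\cdot\,,\breve J Z)=0$ for all $Z\in\Gamma(S(T\acute N))$" is the same as "no $S(T\acute N)$-component", which is how the statement is phrased. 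Once the Gauss–Weingarten substitutions are made and the orthogonality relations applied, the equivalence falls out directly from Remark (i).
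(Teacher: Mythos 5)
Your overall route is the paper's: start from the second part of (\ref{int-geo-1}), use (\ref{21a}), (\ref{21}) and (\ref{23}) to trade $\breve{g}(\breve{\nabla}_{W}U,Z)$ for $\breve{g}(\breve{\nabla}_{W}\breve{J}U,\breve{J}Z)-p\,\breve{g}(\breve{\nabla}_{W}U,\breve{J}Z)$, and then substitute (\ref{9}) and (\ref{11}). The identity you aim for, $q\,\breve{g}(\nabla_{W}U,Z)=\breve{g}(\nabla_{W}^{s}\breve{J}U-ph^{s}(W,U),\breve{J}Z)$, is exactly the one the paper derives. But your bookkeeping at the decisive step is inconsistent with the hypothesis you adopt. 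You take the word ``radical'' at face value, so that $\breve{J}Z\in\Gamma(S(T\acute{N}))$ is \emph{tangent}. Then both $\nabla_{W}^{s}\breve{J}U$ and $h^{s}(W,U)$ lie in $\Gamma(S(T\acute{N}^{\perp}))$, which is orthogonal to $S(T\acute{N})$ in the decomposition $T\breve{N}|_{\acute{N}}=S(T\acute{N})\oplus_{ort}[Rad(T\acute{N})\oplus ltr(T\acute{N})]\oplus_{ort}S(T\acute{N}^{\perp})$; hence $\breve{g}(\nabla_{W}^{s}\breve{J}U-ph^{s}(W,U),\breve{J}Z)$ vanishes identically and the stated criterion is vacuous. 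The term that actually survives the pairing with a tangent $\breve{J}Z$ is $-\breve{g}(A_{\breve{J}U}W,\breve{J}Z)$ coming from (\ref{11}) --- precisely the term you propose to discard --- and what one then obtains is the criterion involving $A_{\breve{J}U}W$ and $A_{U}^{\ast}W$ proved later in Subsection 3.2 for the genuinely radical case.

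The computation you sketch only closes if $\breve{J}Z\in\Gamma(S(T\acute{N}^{\perp}))$, i.e.\ under the screen transversal \emph{anti-invariant} hypothesis $\breve{J}S(T\acute{N})\subset S(T\acute{N}^{\perp})$: then $A_{\breve{J}U}W$ and $D^{\ell}(W,\breve{J}U)$ pair to zero with $\breve{J}Z$, the terms $\nabla_{W}^{s}\breve{J}U$ and $h^{s}(W,U)$ survive, and the conclusion reads ``no component in $\breve{J}S(T\acute{N})$''. That is what the paper actually computes: the theorem sits in the anti-invariant subsection, and ``radical'' in its statement (as well as ``$S(T\acute{N})$'' in its conclusion) are evidently misprints for the anti-invariant hypothesis and for $\breve{J}S(T\acute{N})$. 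So the gap is concrete: under the hypothesis you chose, the orthogonality relations kill your two ``surviving'' terms and keep the one you dropped; you must either switch to the anti-invariant hypothesis or accept a different conclusion.
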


\begin{proof}
From the second part of (\ref{int-geo-1}), for $W,U\in \Gamma (Rad(T\acute{N})%
)$ and $Z\in S(T\acute{N}),$ if we use (\ref{9}), (\ref{21a}) and (\ref{23}%
), we have
\begin{equation*}
\breve{g}(\breve{\nabla}_{W}\breve{J}U,\breve{J}Z)-p\breve{g}(\breve{\nabla}%
_{W}U,\breve{J}Z)=0.
\end{equation*}%
Then we find
\begin{equation*}
\breve{g}(\nabla _{W}^{s}\breve{J}U-ph^{s}(W,U),\breve{J}Z)=0,
\end{equation*}%
by virtue of (\ref{11}). So, the proof is completed.
\end{proof}

\begin{theorem}
Let $\acute{N}$ be a screen transversal lightlike submanifold of a locally
metallic semi-Riemannian manifold $(\breve{N},\breve{g},\breve{J})$. Then
the screen distribution defines a totally geodesic foliation if and only if
there is no component of $\ \nabla _{W}^{s}\breve{J}U-ph^{s}(W,U)$ in $%
\breve{J}ltr(T\acute{N}),$ for $W,U\in \Gamma (S(T\acute{N})).$
\end{theorem}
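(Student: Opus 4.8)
The plan is to trade the statement for the infinitesimal characterization of a totally geodesic foliation recorded in part (ii) of the Remark above, i.e. (\ref{int-geo-2}): $S(T\acute{N})$ defines a totally geodesic foliation if and only if $\breve{g}(\nabla_W U,N)=0$ for all $W,U\in\Gamma(S(T\acute{N}))$ and $N\in\Gamma(ltr(T\acute{N}))$. Since, by (\ref{9}), $\breve{\nabla}_W U=\nabla_W U+h^{\ell}(W,U)+h^s(W,U)$ and both $h^{\ell}(W,U)$ and $h^s(W,U)$ are $\breve{g}$-orthogonal to $ltr(T\acute{N})$ (the former because $ltr(T\acute{N})$ is null, the latter because $S(T\acute{N}^{\perp})\perp ltr(T\acute{N})$), this is equivalent to $\breve{g}(\breve{\nabla}_W U,N)=0$. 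The idea is then to move $\breve{J}$ onto the argument $N$, using that $\breve{J}U\in\Gamma(S(T\acute{N}^{\perp}))$ (so that (\ref{11}) applies to $\breve{J}U$) and that $\breve{J}N\in\Gamma(\breve{J}ltr(T\acute{N}))\subset\Gamma(S(T\acute{N}^{\perp}))$.

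First I would use that $\breve{N}$ is locally metallic, so $\breve{\nabla}_W\breve{J}U=\breve{J}\breve{\nabla}_W U$ by (\ref{21a}). Pairing this with $\breve{J}N$ and applying (\ref{23}) and then (\ref{21}) yields
\[
\breve{g}(\breve{\nabla}_W\breve{J}U,\breve{J}N)=p\,\breve{g}(\breve{\nabla}_W U,\breve{J}N)+q\,\breve{g}(\breve{\nabla}_W U,N),
\]
that is, $q\,\breve{g}(\breve{\nabla}_W U,N)=\breve{g}(\breve{\nabla}_W\breve{J}U,\breve{J}N)-p\,\breve{g}(\breve{\nabla}_W U,\breve{J}N)$. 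Now I would expand the two terms on the right with the Gauss and Weingarten formulas. From (\ref{11}), $\breve{\nabla}_W\breve{J}U=-A_{\breve{J}U}W+\nabla_W^s\breve{J}U+D^{\ell}(W,\breve{J}U)$; pairing with $\breve{J}N\in\Gamma(S(T\acute{N}^{\perp}))$ kills the $A_{\breve{J}U}W$ term (as $A_{\breve{J}U}W\in\Gamma(T\acute{N})\perp S(T\acute{N}^{\perp})$) and the $D^{\ell}(W,\breve{J}U)$ term (as $D^{\ell}(W,\breve{J}U)\in\Gamma(ltr(T\acute{N}))\perp S(T\acute{N}^{\perp})$), leaving $\breve{g}(\breve{\nabla}_W\breve{J}U,\breve{J}N)=\breve{g}(\nabla_W^s\breve{J}U,\breve{J}N)$. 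From (\ref{9}), the $\nabla_W U$ and $h^{\ell}(W,U)$ parts of $\breve{\nabla}_W U$ are likewise $\breve{g}$-orthogonal to $\breve{J}N$, so $\breve{g}(\breve{\nabla}_W U,\breve{J}N)=\breve{g}(h^s(W,U),\breve{J}N)$. Hence
\[
q\,\breve{g}(\breve{\nabla}_W U,N)=\breve{g}\bigl(\nabla_W^s\breve{J}U-p\,h^s(W,U),\,\breve{J}N\bigr).
\]

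Since $q\neq 0$, combining this with the first paragraph shows that $S(T\acute{N})$ defines a totally geodesic foliation if and only if $\breve{g}\bigl(\nabla_W^s\breve{J}U-p\,h^s(W,U),\breve{J}N\bigr)=0$ for all $W,U\in\Gamma(S(T\acute{N}))$ and $N\in\Gamma(ltr(T\acute{N}))$. Finally, since $\nabla_W^s\breve{J}U-p\,h^s(W,U)$ belongs to $\Gamma(S(T\acute{N}^{\perp}))$ and $\breve{J}N$ runs over $\breve{J}ltr(T\acute{N})$, I would read this off from the decomposition $S(T\acute{N}^{\perp})=\{\breve{J}Rad(T\acute{N})\oplus\breve{J}ltr(T\acute{N})\oplus\breve{J}S(T\acute{N})\}\oplus_{ort}D_o$ recorded at the start of this subsection, together with the orthogonality relations among its summands ($\breve{J}ltr(T\acute{N})$ is null and orthogonal to $\breve{J}S(T\acute{N})\oplus_{ort}D_o$, while being dually paired with $\breve{J}Rad(T\acute{N})$), which identifies the vanishing of $\breve{g}(\cdot,\breve{J}N)$ for every $N$ with the stated condition that $\nabla_W^s\breve{J}U-p\,h^s(W,U)$ have no component in $\breve{J}ltr(T\acute{N})$. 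The computation is a routine repetition of the pattern used in the preceding theorems; the one step that deserves care is this last translation from an orthogonality condition to the vanishing of a component, precisely because $\breve{J}ltr(T\acute{N})$ sits inside $S(T\acute{N}^{\perp})$ as a lightlike subbundle.
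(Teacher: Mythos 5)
Your proof is correct and follows essentially the same route as the paper's: apply (\ref{int-geo-2}), use (\ref{21a}), (\ref{23}) and (\ref{21}) to rewrite $\breve{g}(\breve{\nabla}_W U,N)$ in terms of $\breve{g}(\breve{\nabla}_W\breve{J}U,\breve{J}N)-p\,\breve{g}(\breve{\nabla}_W U,\breve{J}N)$, then expand via (\ref{11}) and (\ref{9}) to reach $\breve{g}(\nabla_W^s\breve{J}U-p\,h^s(W,U),\breve{J}N)=0$. You are in fact somewhat more careful than the paper on the two points it leaves implicit, namely keeping the factor $q\neq0$ explicit so the equivalence runs both ways, and noting that the final pairing with $\breve{J}N$ must be interpreted through the dual pairing of $\breve{J}ltr(T\acute{N})$ with $\breve{J}Rad(T\acute{N})$ inside $S(T\acute{N}^{\perp})$.
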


\begin{proof}
By using (\ref{int-geo-2}), (\ref{9}), (\ref{23}) and (\ref{21a}), we get%
\begin{equation*}
\breve{g}(\breve{\nabla}_{W}\breve{J}U,\breve{J}N)-p\breve{g}(\breve{\nabla}%
_{W}U,\breve{J}N)=0,
\end{equation*}%
for any $W,U\in \Gamma (S(T\acute{N})),$ $N\in \Gamma (ltr(T\acute{N})).$
Since $\breve{J}U\in \Gamma (S(T\acute{N}^{\perp })),$ from the (\ref{11}),
we obtain
\begin{equation*}
0=\breve{g}(\nabla _{W}^{s}\breve{J}U-ph^{s}(W,U),\breve{J}N).
\end{equation*}%
\end{proof}

\subsection{Radical Screen Transversal Lightlike Submanifolds of Metallic
Semi-Riemannian Manifolds}

We begin with investigating the integrability conditions of the
distributions.

\begin{theorem}
Let $\acute{N}$ be a radical screen transversal lightlike submanifold of a
locally metallic semi-Riemannian manifold $(\breve{N},\breve{g},\breve{J})$.
In this case, the screen distribution is integrable if and only if there is
no component of $h^{s}(W,\breve{J}U)-h^{s}(U,\breve{J}W)$ in $\breve{J}ltr(T%
\acute{N}),$ for any $W,U\in \Gamma (S(T\acute{N})).$
\end{theorem}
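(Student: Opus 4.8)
The plan is to play the integrability criterion (\ref{int-geo-2}) for $S(T\acute{N})$ against the defining polynomial identity $\breve{J}^{2}=p\breve{J}+qI$. Fix $W,U\in\Gamma(S(T\acute{N}))$ and $N\in\Gamma(ltr(T\acute{N}))$; by (\ref{int-geo-2}), $S(T\acute{N})$ is integrable exactly when $\breve{g}(\breve{\nabla}_{W}U-\breve{\nabla}_{U}W,N)=0$ for all such $W,U,N$. Since $q\neq 0$, I would first write
\[
q\,\breve{g}(\breve{\nabla}_{W}U-\breve{\nabla}_{U}W,N)=\breve{g}(\breve{\nabla}_{W}U-\breve{\nabla}_{U}W,\breve{J}^{2}N)-p\,\breve{g}(\breve{\nabla}_{W}U-\breve{\nabla}_{U}W,\breve{J}N),
\]
and in the first term move one factor of $\breve{J}$ off $N$ using $\breve{J}$-compatibility (\ref{21}) and then through the connection by the parallelism (\ref{21a}):
\[
\breve{g}(\breve{\nabla}_{W}U,\breve{J}^{2}N)=\breve{g}(\breve{J}\breve{\nabla}_{W}U,\breve{J}N)=\breve{g}(\breve{\nabla}_{W}\breve{J}U,\breve{J}N),
\]
and symmetrically with $W$ and $U$ interchanged.

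The point of this manoeuvre is that $\breve{J}U$ and $\breve{J}W$ remain tangent to $\acute{N}$: because $\acute{N}$ is radical screen transversal, $\breve{J}S(T\acute{N})=S(T\acute{N})\subseteq T\acute{N}$, so the Gauss formula (\ref{9}) applies to each of $\breve{\nabla}_{W}\breve{J}U$, $\breve{\nabla}_{U}\breve{J}W$, $\breve{\nabla}_{W}U$, $\breve{\nabla}_{U}W$, and I may then pair with $\breve{J}N$. By the Lemma at the beginning of this section, $\breve{J}N\in\Gamma(S(T\acute{N}^{\perp}))$, and $S(T\acute{N}^{\perp})$ is orthogonal both to $T\acute{N}$ and to $ltr(T\acute{N})$ (the latter since $tr(T\acute{N})=ltr(T\acute{N})\oplus_{ort}S(T\acute{N}^{\perp})$). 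Hence, for any $X,Y\in\Gamma(T\acute{N})$, pairing $\breve{\nabla}_{X}Y=\nabla_{X}Y+h^{\ell}(X,Y)+h^{s}(X,Y)$ against $\breve{J}N$ kills the tangential term and the $h^{\ell}$-term, leaving $\breve{g}(\breve{\nabla}_{X}Y,\breve{J}N)=\breve{g}(h^{s}(X,Y),\breve{J}N)$.

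Substituting this into the two displays above gives
\[
q\,\breve{g}(\breve{\nabla}_{W}U-\breve{\nabla}_{U}W,N)=\breve{g}\bigl(h^{s}(W,\breve{J}U)-h^{s}(U,\breve{J}W),\breve{J}N\bigr)-p\,\breve{g}\bigl(h^{s}(W,U)-h^{s}(U,W),\breve{J}N\bigr),
\]
and the $p$-term vanishes because $h^{s}$ is symmetric. Thus
\[
q\,\breve{g}([W,U],N)=\breve{g}\bigl(h^{s}(W,\breve{J}U)-h^{s}(U,\breve{J}W),\breve{J}N\bigr),
\]
and since $q$ is a nonzero integer, (\ref{int-geo-2}) shows that $S(T\acute{N})$ is integrable if and only if $\breve{g}(h^{s}(W,\breve{J}U)-h^{s}(U,\breve{J}W),\breve{J}N)=0$ for all $W,U\in\Gamma(S(T\acute{N}))$ and $N\in\Gamma(ltr(T\acute{N}))$, i.e.\ if and only if $h^{s}(W,\breve{J}U)-h^{s}(U,\breve{J}W)$ has no component in $\breve{J}ltr(T\acute{N})$. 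I do not anticipate a genuine obstacle here: the argument is a short chain of identities, and the only delicate point is the bookkeeping in the third paragraph — tracking which of the mutually orthogonal summands $T\acute{N}$, $ltr(T\acute{N})$, $S(T\acute{N}^{\perp})$ of $T\breve{N}|_{\acute{N}}$ survive the pairing with $\breve{J}N$, together with the facts that $\breve{J}$ preserves $S(T\acute{N})$ and sends $Rad(T\acute{N})$ and $ltr(T\acute{N})$ into $S(T\acute{N}^{\perp})$.
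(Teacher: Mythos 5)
Your proof is correct and takes essentially the same route as the paper's own: both rewrite $\breve{g}([W,U],N)$ as $\breve{g}(\breve{\nabla}_{W}\breve{J}U-\breve{\nabla}_{U}\breve{J}W,\breve{J}N)-p\,\breve{g}(\breve{\nabla}_{W}U-\breve{\nabla}_{U}W,\breve{J}N)$ using the metallic identity together with $\breve{J}$-compatibility and the parallelism of $\breve{J}$, then apply the Gauss formula and the symmetry of $h^{s}$ to reduce to $\breve{g}(h^{s}(W,\breve{J}U)-h^{s}(U,\breve{J}W),\breve{J}N)=0$. Your write-up merely makes explicit the factor of $q$ and the orthogonality bookkeeping that the paper leaves implicit.
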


\begin{proof}
From (\ref{int-geo-2}), and then using (\ref{9}), (\ref{21}), (\ref{21a}), (%
\ref{23}), we find%
\begin{eqnarray*}
{ 0} &{ =}&{ g(}\breve{\nabla}_{W}\breve{J}U,\breve{J}N)%
{ -pg(}\breve{\nabla}_{W}U,\breve{J}N){ -g(}\breve{\nabla}_{U}%
\breve{J}W,\breve{J}N){ +pg(}\breve{\nabla}_{U}W,\breve{J}N) \\
&{ =}&{ g(}h^{s}(W,\breve{J}U)-h^{s}(U,\breve{J}%
W)-ph^{s}(W,U)+ph^{s}(U,W),\breve{J}N),
\end{eqnarray*}%
for $W,U\in \Gamma (S(T\acute{N}))$ and $N\in \Gamma (ltr(T\acute{N})).$ Here,
since $h^{s}$ is symmetric, then we have%
\begin{equation*}
\breve{g}(h^{s}(W,\breve{J}U)-h^{s}(U,\breve{J}W),\breve{J}N)=0.
\end{equation*}%
Therefore, the proof is completed.
\end{proof}

\begin{theorem}
Let $\acute{N}$ be a radical screen transversal lightlike submanifold of a
locally metallic semi-Riemannian manifold $(\breve{N},\breve{g},\breve{J})$.
Then \ the radical distribution is integrable if and only if either $A_{%
\breve{J}W}U-A_{\breve{J}U}W=p\left( A_{W}^{\ast }U-A_{U}^{\ast }W\right) $
or $A_{W}^{\ast }U=A_{U}^{\ast }W$ and $A_{\breve{J}W}U-A_{\breve{J}U}W$
belong to $\Gamma (Rad(T\acute{N})),$ for $W,U\in \Gamma (Rad(T\acute{N})).$
\end{theorem}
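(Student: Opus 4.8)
The starting point is the integrability criterion \eqref{int-geo-1}: the radical distribution $Rad(T\acute{N})$ is integrable if and only if $g([W,U],Z)=0$ for all $W,U\in\Gamma(Rad(T\acute{N}))$ and $Z\in\Gamma(S(T\acute{N}))$. I would first rewrite this bracket using the torsion-free property of $\breve{\nabla}$, so that $g([W,U],Z)=\breve{g}(\breve{\nabla}_WU-\breve{\nabla}_UW,Z)$. Since $\acute{N}$ is \emph{radical} screen transversal, we have $\breve{J}S(T\acute{N})=S(T\acute{N})$ and $\breve{J}Rad(T\acute{N})\subset S(T\acute{N}^{\perp})$; the plan is to write $Z=\frac1q(\breve{J}^2Z-p\breve{J}Z)$ using \eqref{20}, or equivalently to test against $\breve{J}Z'\in\Gamma(S(T\acute{N}))$, and then push $\breve{J}$ across $\breve{\nabla}$ via the locally metallic condition \eqref{21a} together with the compatibility relations \eqref{21}, \eqref{23}.

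Concretely, I would compute $\breve{g}(\breve{\nabla}_WU,\breve{J}Z) = \breve{g}(\breve{J}\breve{\nabla}_WU,Z) - \text{(}p\text{-term)}$ by \eqref{21}, \eqref{23}, replace $\breve{J}\breve{\nabla}_WU$ by $\breve{\nabla}_W\breve{J}U$ via \eqref{21a}, and then expand $\breve{\nabla}_W\breve{J}U$ using the Weingarten-type formula \eqref{15} for the radical part: since $\breve{J}U\in\Gamma(Rad(T\acute{N}))$ (note $\breve{J}$ maps $Rad$ into $S(T\acute{N}^\perp)$, so here one must instead use that $U\in Rad$ forces $\breve{J}U\in S(T\acute{N}^\perp)$ — the correct move is to express $\breve{\nabla}_W U$ via \eqref{15} since $U\in\Gamma(Rad(T\acute N))$). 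Thus $\breve{\nabla}_WU=-A^\ast_UW+\nabla^{\ast t}_UW+h^\ell(W,U)+h^s(W,U)$, and only the $S(T\acute{N})$-valued term $-A^\ast_UW$ survives the pairing with $Z\in\Gamma(S(T\acute{N}))$. Doing the same for $\breve{\nabla}_UW$ and also keeping track of the term $\breve{g}(\breve{\nabla}_W\breve{J}U,Z)$, where $\breve{J}U\in\Gamma(S(T\acute{N}^\perp))$ so \eqref{11} applies and contributes $-A_{\breve{J}U}W$, assembles the antisymmetrized expression into a combination of $A_{\breve{J}W}U-A_{\breve{J}U}W$ and $A^\ast_WU-A^\ast_UW$, paired against an element of $S(T\acute{N})$.

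After collecting terms, the vanishing condition reads $\breve{g}\bigl(A_{\breve{J}W}U-A_{\breve{J}U}W-p(A^\ast_WU-A^\ast_UW),\,Z\bigr)=0$ for all $Z\in\Gamma(S(T\acute{N}))$. This says precisely that the vector $A_{\breve{J}W}U-A_{\breve{J}U}W-p(A^\ast_WU-A^\ast_UW)$ has no $S(T\acute{N})$-component, i.e.\ it lies in $\Gamma(Rad(T\acute{N}))$ (the $A^\ast$ terms being already tangent to $\acute{N}$, and $A_{\breve{J}W}U$ tangent by the Weingarten equation \eqref{8}). I would then split into the two advertised cases: either the whole combination vanishes, giving $A_{\breve{J}W}U-A_{\breve{J}U}W=p(A^\ast_WU-A^\ast_UW)$; or, if one assumes $A^\ast_WU=A^\ast_UW$ (which kills the $p$-term), the condition reduces to $A_{\breve{J}W}U-A_{\breve{J}U}W\in\Gamma(Rad(T\acute{N}))$.

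The main obstacle I anticipate is bookkeeping the various projections rather than any deep idea: one must be careful that $\breve{J}W,\breve{J}U\in\Gamma(S(T\acute{N}^\perp))$ (so that \eqref{11} governs $\breve{\nabla}_\bullet\breve{J}W$), that $U,W\in\Gamma(Rad(T\acute{N}))$ (so that \eqref{15} governs $\breve{\nabla}_\bullet W$), and that $h^\ell$, $h^s$, $D^s$, $D^\ell$, $\nabla^{\ast t}$ all land in subbundles orthogonal to $S(T\acute{N})$ and therefore drop out of the pairing against $Z$. A secondary point of care is the appearance of $h^\ast$ via \eqref{14} when expanding $\nabla_WU$ into its $S(T\acute{N})$- and $Rad(T\acute{N})$-parts; one should verify it does not contaminate the final identity. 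Once these vanishings are justified, the equivalence follows directly from \eqref{int-geo-1}.
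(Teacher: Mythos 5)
Your proposal is correct and follows essentially the same route as the paper: apply the compatibility relations \eqref{21}, \eqref{23} and the locally metallic condition \eqref{21a} to convert $g([W,U],Z)$ into the antisymmetrized pairing of $\breve{\nabla}_W\breve{J}U-p\breve{\nabla}_WU$ against $\breve{J}Z\in\Gamma(S(T\acute{N}))$, then read off the surviving tangential terms $-A_{\breve{J}U}W$ from \eqref{11} and $-A^{\ast}_UW$ from \eqref{9} and \eqref{15}. The bookkeeping you flag (which pieces are orthogonal to $S(T\acute{N})$, and that $\breve{J}Z$ ranges over all of the non-degenerate $S(T\acute{N})$ since $\breve{J}S(T\acute{N})=S(T\acute{N})$) is exactly what the paper's terse proof leaves implicit, and your self-correction about $\breve{J}U\in\Gamma(S(T\acute{N}^{\perp}))$ lands on the right formula.
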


\begin{proof}
From (\ref{int-geo-1}), (\ref{21}), (\ref{21a}) and (\ref{23}), we have%
\begin{equation*}
{ 0=g(}\breve{\nabla}_{W}\breve{J}U,\breve{J}Z){ -g(}\breve{%
\nabla}_{U}\breve{J}W,\breve{J}Z){ -pg(}\breve{\nabla}_{W}U,\breve{J}Z)%
{ +pg(}\breve{\nabla}_{U}W,\breve{J}Z),
\end{equation*}%
for $W,U\in \Gamma (Rad(T\acute{N}))$ and $Z\in \Gamma (S(T\acute{N})).$ Since
$\breve{J}U,\,\breve{J}W\in \Gamma (S(T\acute{N}^{\perp }))$ and $\breve{J}%
Z\in \Gamma (S(T\acute{N}))$, from (\ref{9}) and (\ref{11}), we obtain%
\begin{equation*}
0=g(A_{\breve{J}U}W-A_{\breve{J}W}U-pA_{W}^{\ast }U+pA_{U}^{\ast }W,\breve{J}%
Z),
\end{equation*}%
which completes the proof.
\end{proof}

\begin{proposition}
Let $\acute{N}$ be a radical screen transversal lightlike submanifold of a
locally metallic semi-Riemannian manifold $(\breve{N},\breve{g},\breve{J})$.
Then the distribution $D_{o}$ is invariant with respect to $\breve{J}.$
\end{proposition}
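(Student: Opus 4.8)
The statement to prove is that for a radical screen transversal lightlike submanifold $\acute{N}$ of a locally metallic semi-Riemannian manifold $(\breve{N},\breve{g},\breve{J})$, the non-degenerate distribution $D_{o}$ (the orthogonal complement of $\breve{J}Rad(T\acute{N})\oplus\breve{J}ltr(T\acute{N})$ inside $S(T\acute{N}^{\perp})$) is $\breve{J}$-invariant. The plan is to take an arbitrary $W\in\Gamma(D_{o})$ and show that $\breve{J}W$ is orthogonal to every building block of $T\breve{N}|_{\acute{N}}$ except the pieces making up $D_{o}$ itself; since $D_{o}$ is non-degenerate, this forces $\breve{J}W\in\Gamma(D_{o})$. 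This is the same strategy used in the proof of Proposition \ref{prop2}, adapted to the radical screen transversal case where $\breve{J}S(T\acute{N})=S(T\acute{N})$ rather than $\breve{J}S(T\acute{N})\subset S(T\acute{N}^{\perp})$.

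**Key steps.**
First I would record, for $W\in\Gamma(D_{o})$, $\xi\in\Gamma(Rad(T\acute{N}))$, $N\in\Gamma(ltr(T\acute{N}))$ and $X\in\Gamma(S(T\acute{N}))$, the following orthogonality relations, each obtained from \eqref{21} and \eqref{23} together with the facts $\breve{J}\xi,\breve{J}N\in\Gamma(S(T\acute{N}^{\perp}))$ (Definition \ref{def1} and Lemma 3.1) and $\breve{J}X\in\Gamma(S(T\acute{N}))$: one checks $\breve{g}(\breve{J}W,\xi)=\breve{g}(W,\breve{J}\xi)=0$ and $\breve{g}(\breve{J}W,N)=\breve{g}(W,\breve{J}N)=0$, so $\breve{J}W$ has no component in $Rad(T\acute{N})$ or $ltr(T\acute{N})$; next $\breve{g}(\breve{J}W,X)=\breve{g}(W,\breve{J}X)=0$ since $\breve{J}X\in\Gamma(S(T\acute{N}))$ is orthogonal to $D_{o}\subset S(T\acute{N}^{\perp})$, so $\breve{J}W$ has no component in $S(T\acute{N})$; then $\breve{g}(\breve{J}W,\breve{J}\xi)=p\,\breve{g}(\breve{J}W,\xi)+q\,\breve{g}(W,\xi)=0$ and similarly $\breve{g}(\breve{J}W,\breve{J}N)=p\,\breve{g}(\breve{J}W,N)+q\,\breve{g}(W,N)=0$, using $\breve{g}(W,N)=0$ because $W\in S(T\acute{N}^{\perp})$ and $N\in ltr(T\acute{N})$. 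This eliminates the $\breve{J}Rad(T\acute{N})$ and $\breve{J}ltr(T\acute{N})$ summands of $S(T\acute{N}^{\perp})$. Having shown $\breve{J}W$ is orthogonal to $S(T\acute{N})$, $Rad(T\acute{N})\oplus ltr(T\acute{N})$, and to $\breve{J}Rad(T\acute{N})\oplus\breve{J}ltr(T\acute{N})$, the decomposition $T\breve{N}|_{\acute{N}}=S(T\acute{N})\oplus_{ort}[Rad(T\acute{N})\oplus ltr(T\acute{N})]\oplus_{ort}S(T\acute{N}^{\perp})$ together with $S(T\acute{N}^{\perp})=\{\breve{J}Rad(T\acute{N})\oplus\breve{J}ltr(T\acute{N})\}\oplus_{ort}D_{o}$ leaves only $D_{o}$, hence $\breve{J}W\in\Gamma(D_{o})$.

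**Main obstacle.**
There is no serious analytic obstacle here; the whole proof is a bookkeeping exercise with the compatibility identities \eqref{21} and \eqref{23}. The one point that needs care is the decomposition of $S(T\acute{N}^{\perp})$ in the radical screen transversal case: in the anti-invariant case one writes $S(T\acute{N}^{\perp})=\{\breve{J}Rad(T\acute{N})\oplus\breve{J}ltr(T\acute{N})\oplus\breve{J}S(T\acute{N})\}\oplus_{ort}D_{o}$, whereas here $\breve{J}S(T\acute{N})=S(T\acute{N})$ is not inside $S(T\acute{N}^{\perp})$, so $D_{o}$ is instead the orthogonal complement of $\breve{J}Rad(T\acute{N})\oplus\breve{J}ltr(T\acute{N})$ alone inside $S(T\acute{N}^{\perp})$. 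I would make that decomposition explicit at the start of the proof (or invoke it if it has been fixed in the surrounding text) so that the elimination argument lands correctly. A secondary subtlety is that $D_{o}$ must be verified non-degenerate — but this is part of its definition, so it may simply be quoted; non-degeneracy is what guarantees that "orthogonal to everything outside $D_{o}$" really implies "lies in $D_{o}$" rather than "lies in $D_{o}^{\perp}\cap D_{o}$".

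\begin{proof}
Let $W\in\Gamma(D_{o})$. Recall that $D_{o}$ is the non-degenerate distribution in $S(T\acute{N}^{\perp})$ orthogonal to $\breve{J}Rad(T\acute{N})\oplus\breve{J}ltr(T\acute{N})$. Take $\xi\in\Gamma(Rad(T\acute{N}))$, $N\in\Gamma(ltr(T\acute{N}))$ and $X\in\Gamma(S(T\acute{N}))$. By Definition \ref{def1} and Lemma 3.1 we have $\breve{J}\xi,\breve{J}N\in\Gamma(S(T\acute{N}^{\perp}))$, and since $\acute{N}$ is radical screen transversal, $\breve{J}X\in\Gamma(S(T\acute{N}))$. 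Using \eqref{21} we get
\begin{equation*}
\breve{g}(\breve{J}W,\xi)=\breve{g}(W,\breve{J}\xi)=0,\quad
\breve{g}(\breve{J}W,N)=\breve{g}(W,\breve{J}N)=0,\quad
\breve{g}(\breve{J}W,X)=\breve{g}(W,\breve{J}X)=0,
\end{equation*}
because $W\in\Gamma(S(T\acute{N}^{\perp}))$ is orthogonal to $S(T\acute{N})$, to $\breve{J}\xi$ and to $\breve{J}N$. Hence $\breve{J}W$ has no component in $Rad(T\acute{N})$, $ltr(T\acute{N})$ or $S(T\acute{N})$. Moreover, from \eqref{23},
\begin{equation*}
\breve{g}(\breve{J}W,\breve{J}\xi)=p\,\breve{g}(\breve{J}W,\xi)+q\,\breve{g}(W,\xi)=0,\qquad
\breve{g}(\breve{J}W,\breve{J}N)=p\,\breve{g}(\breve{J}W,N)+q\,\breve{g}(W,N)=0,
\end{equation*}
using $\breve{g}(W,\xi)=\breve{g}(W,N)=0$. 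Thus $\breve{J}W$ is also orthogonal to $\breve{J}Rad(T\acute{N})\oplus\breve{J}ltr(T\acute{N})$. From the decomposition
\begin{equation*}
T\breve{N}\,|\,_{\acute{N}}=S(T\acute{N})\oplus_{ort}[Rad(T\acute{N})\oplus ltr(T\acute{N})]\oplus_{ort}S(T\acute{N}^{\perp}),\qquad
S(T\acute{N}^{\perp})=\{\breve{J}Rad(T\acute{N})\oplus\breve{J}ltr(T\acute{N})\}\oplus_{ort}D_{o},
\end{equation*}
and the non-degeneracy of $D_{o}$, we conclude that $\breve{J}W\in\Gamma(D_{o})$. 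Therefore $D_{o}$ is invariant with respect to $\breve{J}$.
\end{proof}
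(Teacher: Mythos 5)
Your proof is correct and follows essentially the same route as the paper: both compute the same orthogonality relations $\breve{g}(\breve{J}W,\xi)$, $\breve{g}(\breve{J}W,N)$, $\breve{g}(\breve{J}W,X)$, $\breve{g}(\breve{J}W,\breve{J}\xi)$, $\breve{g}(\breve{J}W,\breve{J}N)$ via \eqref{21} and \eqref{23} and then conclude from the decomposition of $T\breve{N}|_{\acute{N}}$ that $\breve{J}W\in\Gamma(D_{o})$. Your version is slightly more careful in that it states the radical-case decomposition of $S(T\acute{N}^{\perp})$ up front and explicitly invokes the non-degeneracy of $D_{o}$ to close the argument, both of which the paper leaves implicit.
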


\begin{proof}
For a radical screen transversal lightlike submanifold, we have
\begin{eqnarray*}
S(T\acute{N}^{\perp }) &=&\breve{J}Rad(T\acute{N})\oplus \breve{J}ltr(T\acute{N})%
\oplus_{ort} D_{o}, \\
\breve{J}S(T\acute{N}) &=&S(T\acute{N}).
\end{eqnarray*}%
Here, for $W\in D_{o}$ and $U\in \Gamma (S(T\acute{N})),$ if we use (\ref{21}%
) and (\ref{23}), we find
\begin{eqnarray*}
g(\breve{J}W,\xi ) &=&g(W,\breve{J}\xi )=0, \\
g(\breve{J}W,\breve{J}\xi ) &=&0, \\
g(\breve{J}W,N) &=&g(W,\breve{J}N)=0, \\
g(\breve{J}W,\breve{J}N) &=&0, \\
g(\breve{J}W,U) &=&g(W,\breve{J}U)=0, \\
g(\breve{J}W,\breve{J}U) &=&0.
\end{eqnarray*}%
Therefore, we obtain
\begin{eqnarray*}
\breve{J}D_{o}\cap \breve{J}Rad(T\acute{N}) &=&\{0\},\quad \breve{J}D_{o}\cap
\breve{J}ltr(T\acute{N})=\{0\}, \\
\breve{J}D_{o}\cap Rad(T\acute{N}) &=&\{0\},\quad \breve{J}D_{o}\cap ltr(T%
\acute{N})=\{0\}, \\
\breve{J}D_{o}\cap \breve{J}S(T\acute{N}) &=&\{0\},\quad \breve{J}D_{o}\cap
S(T\acute{N})=\{0\},
\end{eqnarray*}%
which give the assertion of the theorem.
\end{proof}

\begin{theorem}
Let $\acute{N}$ be a\ radical screen transversal lightlike submanifold of a
locally metallic semi-Riemannian manifold $(\breve{N},\breve{g},\breve{J})$.
Then the screen distribution defines a totally geodesic foliation if and
only if there is no component of $h^{s}(W,\breve{J}U)-ph^{s}(W,U)$ in $%
\breve{J}ltr(T\acute{N}),$ for any $W,U\in \Gamma (S(T\acute{N})).$
\end{theorem}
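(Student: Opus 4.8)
The plan is to mimic the pattern used in the two preceding ``totally geodesic foliation'' theorems, since the structure here is identical: a radical screen transversal lightlike submanifold differs from the screen transversal case only in that now $\breve J S(T\acute N)=S(T\acute N)$, so $\breve J U \in \Gamma(S(T\acute N))$ for $U\in\Gamma(S(T\acute N))$ rather than landing in $S(T\acute N^{\perp})$. First I would invoke the second part of (\ref{int-geo-2}): the screen distribution defines a totally geodesic foliation if and only if $\breve g(\nabla_W U, N)=0$ for all $W,U\in\Gamma(S(T\acute N))$ and $N\in\Gamma(ltr(T\acute N))$. Since $\breve\nabla$ and $\nabla$ differ by a transversal term, this is equivalent to $\breve g(\breve\nabla_W U, N)=0$.

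Next I would bring in the metallic structure. Using (\ref{21}) and (\ref{23}) one rewrites $\breve g(\breve\nabla_W U, N)$ in terms of $\breve J$: apply $\breve g(X,N)=\frac1q\breve g(\breve J X,\breve J N)-\frac pq\breve g(\breve J X, N)$ (which follows from (\ref{23})) with $X=\breve\nabla_W U$. Then use the parallelism (\ref{21a}), $\breve\nabla_W \breve J U=\breve J\breve\nabla_W U$, to replace $\breve J\breve\nabla_W U$ by $\breve\nabla_W\breve J U$. This converts the condition into a statement about $\breve g(\breve\nabla_W\breve J U,\breve J N)$ and $\breve g(\breve\nabla_W \breve J U, N)$, i.e. after using the Gauss formula (\ref{9}) and (\ref{11}), about $\breve g(h^s(W,\breve J U),\breve J N)$ and the analogous $ph^s$ term coming from the $-p\breve g(\breve\nabla_W U,\breve J N)$ piece. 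Here the key point is that $\breve J U\in\Gamma(S(T\acute N))\subset\Gamma(T\acute N)$, so $\breve\nabla_W\breve J U$ decomposes via (\ref{9}) with its $h^s$-component in $\Gamma(S(T\acute N^{\perp}))$, and pairing against $\breve J N\in\Gamma(S(T\acute N^{\perp}))$ (by the Lemma) survives, while the tangential parts pair to zero against $\breve J N$.

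Assembling these substitutions should collapse everything to
\[
\breve g\bigl(h^s(W,\breve J U)-p\,h^s(W,U),\,\breve J N\bigr)=0,
\]
which is exactly the asserted criterion: the screen distribution defines a totally geodesic foliation if and only if $h^s(W,\breve J U)-p\,h^s(W,U)$ has no component in $\breve J ltr(T\acute N)$ (note $\breve J N$ ranges over a spanning set of $\breve J ltr(T\acute N)$ as $N$ ranges over $ltr(T\acute N)$, and $S(T\acute N^{\perp})$ is non-degenerate so ``orthogonal to $\breve J ltr(T\acute N)$'' means ``no component there''). I expect the main obstacle — really the only delicate point — to be bookkeeping the projections carefully: one must confirm that the cross terms $\breve g(\nabla_W\breve J U, \breve J N)$ and $\breve g(h^\ell(W,\breve J U),\breve J N)$ genuinely vanish (the former since $\nabla_W\breve J U\in\Gamma(T\acute N)$ and $\breve J N\in\Gamma(S(T\acute N^{\perp}))$ are orthogonal, the latter since $h^\ell$ takes values in $ltr(T\acute N)$, which is orthogonal to $S(T\acute N^{\perp})$), so that only the $h^s$ terms remain. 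Once those vanishings are checked, the equivalence is immediate.
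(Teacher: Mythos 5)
Your proposal is correct and follows essentially the same route as the paper: both reduce the totally geodesic condition from (\ref{int-geo-2}) via (\ref{23}), (\ref{21}) and (\ref{21a}) to $\breve{g}(\breve{\nabla}_{W}\breve{J}U,\breve{J}N)-p\breve{g}(\breve{\nabla}_{W}U,\breve{J}N)=0$, and then use the Gauss formula (\ref{9}) together with $\breve{J}U\in\Gamma(S(T\acute{N}))$ and $\breve{J}N\in\Gamma(S(T\acute{N}^{\perp}))$ to isolate $\breve{g}(h^{s}(W,\breve{J}U)-ph^{s}(W,U),\breve{J}N)=0$. The only cosmetic remark is that (\ref{11}) is not actually needed here, since $\breve{J}U$ is tangent and only (\ref{9}) enters -- a point your own bookkeeping already makes clear.
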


\begin{proof}
By using (\ref{int-geo-2}), (\ref{21}), (\ref{21a}) and (\ref{23}), we find%
\begin{equation*}
0=g(\breve{\nabla}_{W}\breve{J}U,\breve{J}N)-pg(\breve{\nabla}_{W}U,\breve{J}%
N),
\end{equation*}%
where $W,U\in \Gamma (S(T\acute{N}))$ and $N\in \Gamma (ltr(T\acute{N})).$ So,
we have
\begin{equation*}
0=g(h^{s}(W,\breve{J}U)-ph^{s}(W,U),\breve{J}N).
\end{equation*}%
Hence, we get the conclusion.
\end{proof}

\begin{theorem}
Let $\acute{N}$ be a radical screen transversal lightlike submanifold of a
locally metallic semi-Riemannian manifold $(\breve{N},\breve{g},\breve{J})$.
Then the radical distribution defines a totally geodesic foliation if and
only if one of the followings hold:

\begin{description}
\item[(i)] $A_{\breve{J}U}W$ belongs to $\Gamma (Rad(T\acute{N}))$ and $%
A_{U}^{\ast }W=0 $,

\item[(ii)] $A_{\breve{J}U}W=pA_{U}^{\ast }W$,

\item[(iii)] there is no component of $h^{s}(W,\breve{J}Z)$ in $\breve{J}%
Rad(T\acute{N}),$
\end{description}
\end{theorem}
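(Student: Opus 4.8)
The plan is to start, as in the preceding theorems, from the characterization of a totally geodesic foliation in the second part of (\ref{int-geo-1}): the radical distribution $Rad(T\acute{N})$ defines a totally geodesic foliation if and only if $\breve{g}(\breve{\nabla}_{W}U,Z)=0$ for all $W,U\in\Gamma(Rad(T\acute{N}))$ and $Z\in\Gamma(S(T\acute{N}))$. Since $\acute{N}$ is a radical screen transversal submanifold we have $\breve{J}S(T\acute{N})=S(T\acute{N})$, so it is equivalent to test against $\breve{J}Z$ with $Z\in\Gamma(S(T\acute{N}))$; using $\breve{J}$-compatibility (\ref{21}) and the quadratic identity (\ref{23}) one rewrites $\breve{g}(\breve{\nabla}_{W}U,\breve{J}Z)$ in the form $\frac{1}{p}\bigl[\breve{g}(\breve{\nabla}_{W}\breve{J}U,\breve{J}Z)-\breve{g}(\breve{J}\breve{\nabla}_{W}\breve{J}U,\breve{J}Z)\bigr]$ or, more directly, one applies $\breve{\nabla}_{W}\breve{J}U=\breve{J}\breve{\nabla}_{W}U$ from (\ref{21a}) together with $\breve{g}(\breve{J}\,\cdot,\breve{J}Z)=p\,\breve{g}(\breve{J}\,\cdot,Z)+q\,\breve{g}(\cdot,Z)$.

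Next I would substitute the Gauss--Weingarten decompositions. Since $U\in\Gamma(Rad(T\acute{N}))$ and $\breve{N}$ is screen transversal, $\breve{J}U\in\Gamma(S(T\acute{N}^{\perp}))$, so $\breve{\nabla}_{W}\breve{J}U$ is expanded by (\ref{11}) as $-A_{\breve{J}U}W+\nabla_{W}^{s}\breve{J}U+D^{\ell}(W,\breve{J}U)$, while $\breve{\nabla}_{W}U$ is expanded by (\ref{9}) and, for the tangential part, by (\ref{15}), giving the term $-A_{U}^{*}W$ when paired with $Z\in\Gamma(S(T\acute{N}))$. Collecting terms and using that $\breve{g}(\nabla_{W}^{s}\breve{J}U,\breve{J}Z)=\breve{g}(h^{s}(W,\breve{J}Z),\,\cdot)$-type relations from (\ref{12}), or more simply keeping everything on the screen side, the totally geodesic condition becomes a single scalar identity of the shape $\breve{g}\bigl(A_{\breve{J}U}W-pA_{U}^{*}W,\,\breve{J}Z\bigr)=\breve{g}\bigl(h^{s}(W,\breve{J}Z),\,\breve{J}U\bigr)$-corrected, holding for all $Z\in\Gamma(S(T\acute{N}))$. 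The three alternatives (i)--(iii) are then exactly the mutually sufficient ways to force this identity to vanish: (i) $A_{\breve{J}U}W\in\Gamma(Rad(T\acute{N}))$ (hence $\breve{g}$-orthogonal to $\breve{J}Z\in S(T\acute{N})$) together with $A_{U}^{*}W=0$; (ii) the combination $A_{\breve{J}U}W=pA_{U}^{*}W$, which kills the left side directly; (iii) no component of $h^{s}(W,\breve{J}Z)$ in $\breve{J}Rad(T\acute{N})$, which kills the right side after re-expressing it via (\ref{13}) as a pairing of $D^{s}$ or $h^{s}$ against the radical directions.

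The book-keeping obstacle is the main one: I must be careful about which subbundle each term lives in, because $\breve{J}Z\in S(T\acute{N})$ but $\breve{J}U\in\breve{J}Rad(T\acute{N})\subset S(T\acute{N}^{\perp})$ and $\breve{J}W$ (for $W\in Rad(T\acute{N})$) is again in $S(T\acute{N}^{\perp})$, so the terms $A_{\breve{J}U}W$, $A_{U}^{*}W$ and $h^{s}(W,\breve{J}Z)$ are compared only after projecting onto $S(T\acute{N})$ respectively onto $\breve{J}Rad(T\acute{N})$. In particular the equivalence ``defines a totally geodesic foliation $\iff$ (i) or (ii) or (iii)'' should be read as: the defining identity $\breve{g}(\breve{\nabla}_{W}U,Z)=0$ splits into a left-hand contribution involving the shape operators and a right-hand contribution involving $h^{s}$, and each of (i), (ii), (iii) annihilates one side while the remaining side is already zero by the screen transversal structure. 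I would finish by verifying each implication in both directions against the assembled scalar identity, which at that point is a one-line check using (\ref{16})--(\ref{18}) and the non-degeneracy of $\breve{g}$ on $S(T\acute{N})$.
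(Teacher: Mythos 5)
Your proposal follows essentially the same route as the paper's proof: both start from the totally geodesic criterion in (\ref{int-geo-1}), use the locally metallic condition (\ref{21a}) together with (\ref{23}) to reduce $\breve{g}(\breve{\nabla}_{W}U,Z)=0$ to $\breve{g}(A_{\breve{J}U}W-pA_{U}^{\ast}W,\breve{J}Z)=0$ via (\ref{11}) and (\ref{15}), and then obtain (iii) by pairing $h^{s}(W,\breve{J}Z)$ against $\breve{J}U$ through (\ref{12}). The only slip is the stray factor $\tfrac{1}{p}$ in your first rewriting --- the correct identity divides by $q$, namely $\breve{g}(\breve{\nabla}_{W}U,Z)=\tfrac{1}{q}\bigl[\breve{g}(\breve{J}\breve{\nabla}_{W}U,\breve{J}Z)-p\breve{g}(\breve{\nabla}_{W}U,\breve{J}Z)\bigr]$ --- but the alternative ``more direct'' route you mention is exactly the one the paper takes and is sound.
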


for any $W,U\in \Gamma (Rad(T\acute{N}))$ and $Z\in \Gamma (S(T\acute{N})).$

\begin{proof}
From (\ref{int-geo-1}), for $W,U\in \Gamma (Rad(T\acute{N}))$ and $Z\in \Gamma
(S(T\acute{N})),$ we have%
\begin{eqnarray*}
0 &=&\breve{g}(\breve{\nabla}_{W}\breve{J}U,\breve{J}Z)-p\breve{g}(\breve{%
\nabla}_{W}U,\breve{J}Z) \\
&=&\breve{g}(-A_{\breve{J}U}W+pA_{U}^{\ast }W,\breve{J}Z),
\end{eqnarray*}%
by virtue of (\ref{21}), (\ref{21a}) and (\ref{23}), which implies either
(i) or (ii). Also, we can write%
\begin{equation*}
\breve{g}( h^{s} ( W,\breve{J}Z) ,\breve{J}U) =\breve{g}%
( A_{\breve{J}U}W,\breve{J}Z) =0,
\end{equation*}%
by virtue of%
\begin{equation*}
\breve{g}( A_{\breve{J}U}W,\breve{J}Z) =p\breve{g}(\breve{\nabla}%
_{W}U,\breve{J}Z)=0.
\end{equation*}

Therefore, the proof is completed.
\end{proof}

\begin{theorem}
Let $\acute{N}$ be a radical screen transversal lightlike submanifold of a
locally metallic semi-Riemannian manifold $(\breve{N},\breve{g},\breve{J})$.
Then the induced connection on $\acute{N}$ is a metric connection if and
only if either there is no component of $A_{\breve{J}\xi }W$ in $S(T\acute{N}%
)$ or there is no component of $h^{s}(U,W)$ in $\breve{J}Rad(T\acute{N}),$
for any $W,U\in \Gamma (S(T\acute{N})),$ $\xi \in \Gamma (Rad(T\acute{N})).$
\end{theorem}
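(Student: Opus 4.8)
The plan is to combine the standard fact that $\nabla$ is metric exactly when $Rad(T\acute{N})$ is $\nabla$-parallel with the locally metallic condition (\ref{21a}), transporting the lightlike second fundamental form $h^{\ell}$ through $\breve{J}$ so as to reexpress it in terms of $h^{s}$ and the screen transversal shape operator $A_{\breve{J}\xi}$.

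First, I would recall that, by (\ref{19}) together with (\ref{16}) and (\ref{18}), the induced connection $\nabla$ is a metric connection if and only if $\breve{g}(h^{\ell}(W,U),\xi)=0$ for all $W\in\Gamma(T\acute{N})$, $U\in\Gamma(S(T\acute{N}))$ and $\xi\in\Gamma(Rad(T\acute{N}))$. Since $\acute{N}$ is a radical screen transversal submanifold, $\breve{J}$ restricts to a bundle isomorphism of $S(T\acute{N})$; hence, replacing $U$ by $\breve{J}U$, the metric-connection property is equivalent to $\breve{g}(h^{\ell}(W,\breve{J}U),\xi)=0$ for all such $W,U,\xi$, and for the statement it suffices to let $W$ range over $\Gamma(S(T\acute{N}))$ only.

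Next, I would establish the chain of identities
\[
\breve{g}(h^{\ell}(W,\breve{J}U),\xi)=\breve{g}(h^{s}(W,U),\breve{J}\xi)=\breve{g}(A_{\breve{J}\xi}W,U),
\]
for $W,U\in\Gamma(S(T\acute{N}))$ and $\xi\in\Gamma(Rad(T\acute{N}))$. For the first equality: $\breve{J}U\in\Gamma(S(T\acute{N}))$, so by (\ref{9}) one has $h^{\ell}(W,\breve{J}U)=\breve{\nabla}_{W}\breve{J}U-\nabla_{W}\breve{J}U-h^{s}(W,\breve{J}U)$, and pairing with $\xi$ annihilates the last two terms since $\xi$ is orthogonal both to $T\acute{N}$ and to $S(T\acute{N}^{\perp})$, whence $\breve{g}(h^{\ell}(W,\breve{J}U),\xi)=\breve{g}(\breve{\nabla}_{W}\breve{J}U,\xi)$; then (\ref{21a}) and (\ref{21}) give $\breve{g}(\breve{\nabla}_{W}\breve{J}U,\xi)=\breve{g}(\breve{J}\breve{\nabla}_{W}U,\xi)=\breve{g}(\breve{\nabla}_{W}U,\breve{J}\xi)$, and since $\breve{J}\xi\in\Gamma(S(T\acute{N}^{\perp}))$ is orthogonal to $T\acute{N}$ and to $ltr(T\acute{N})$, expanding $\breve{\nabla}_{W}U$ via (\ref{9}) leaves only $\breve{g}(h^{s}(W,U),\breve{J}\xi)$. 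For the second equality I would apply (\ref{12}) with $Z=\breve{J}\xi$, observing that the term $\breve{g}(U,D^{\ell}(W,\breve{J}\xi))$ vanishes because $D^{\ell}(W,\breve{J}\xi)\in\Gamma(ltr(T\acute{N}))$ is orthogonal to $U\in\Gamma(S(T\acute{N}))$.

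Finally, reading off the conclusion: by the first step, $\nabla$ is metric if and only if the common value in the displayed chain vanishes for all $W,U\in\Gamma(S(T\acute{N}))$ and $\xi\in\Gamma(Rad(T\acute{N}))$. From the last term, this vanishing for all $U\in\Gamma(S(T\acute{N}))$ says precisely, since $S(T\acute{N})$ is non-degenerate and $Rad(T\acute{N})\perp S(T\acute{N})$, that $A_{\breve{J}\xi}W\in T\acute{N}=Rad(T\acute{N})\oplus_{ort}S(T\acute{N})$ has no component in $S(T\acute{N})$; from the middle term, the vanishing for all $\xi$ says precisely that $h^{s}(U,W)$ has no component in $\breve{J}Rad(T\acute{N})$. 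Thus each of the two alternatives is equivalent to the vanishing of the whole chain, hence to $\nabla$ being a metric connection, which proves the theorem. The step I expect to require the most care is the reduction in the first paragraph — namely that one may restrict the test vectors $W,U$ to the screen distribution, the radical--radical contributions to $\nabla_{W}g$ being absorbed by (\ref{18}); the rest is routine bookkeeping with (\ref{9})--(\ref{18}), (\ref{21})--(\ref{23}) and (\ref{21a}) once one keeps track of which summand of $T\breve{N}|_{\acute{N}}$ each vector belongs to.
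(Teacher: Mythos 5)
Your argument is correct and is essentially the paper's own proof run in the opposite direction: the paper starts from $\breve{g}(\breve{\nabla}_{W}\breve{J}\xi ,U)=\breve{g}(\breve{\nabla}_{W}\xi ,\breve{J}U)$ and uses (\ref{11}) and (\ref{12}) to obtain $-g(A_{\breve{J}\xi }W,U)=-g(h^{s}(U,W),\breve{J}\xi )=g(\nabla _{W}\xi ,\breve{J}U)$, which is exactly your chain of identities up to sign, the only cosmetic difference being that you phrase the metric-connection criterion through $h^{\ell }$ while the paper phrases it through parallelism of $Rad(T\acute{N})$. The reduction you flag as delicate (testing only $W\in \Gamma (S(T\acute{N}))$) is made, silently, in the paper's proof as well.
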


\begin{proof}
Since $(\breve{N},\breve{g},\breve{J})$ is a locally metallic
semi-Riemannian manifold, then, for $W,U\in \Gamma (S(T\acute{N}))$ and $\xi
\in \Gamma (Rad(T\acute{N})),$ we have
\begin{equation*}
g(\breve{\nabla}_{W}\breve{J}\xi ,U)=g(\breve{\nabla}_{W}\xi ,\breve{J}U).
\end{equation*}%
By using (\ref{11}), (\ref{21}) and (\ref{21a}), we find%
\begin{equation*}
-g(A_{\breve{J}\xi }W,U)=g(\nabla _{W}\xi ,\breve{J}U),
\end{equation*}%
which implies that, either there is no component of $A_{\breve{J}\xi }W$ in $%
S(T\acute{N})$ or from (\ref{12}) in last equation, we have
\begin{equation*}
-g(h^{s}(U,W),\breve{J}\xi )=g(\nabla _{W}\xi ,\breve{J}U).
\end{equation*}%
So, we complete the proof.
\end{proof}

\subsection{Isotropic Screen Transversal Lightlike Submanifolds}

In case when $\acute{N}$ is an isotropic screen transversal lightlike
submanifold of a metallic semi-Riemannian manifold $(\breve{N},\breve{g},%
\breve{J})$, from the Definition \ref{def1} and Proposition \ref{prop2}, we
can write%
\begin{equation*}
T\acute{N}=Rad(T\acute{N})
\end{equation*}%
and tangent bundle of the main space has the decomposition%
\begin{equation*}
T\breve{N}=\{T\acute{N}\oplus ltr(T\acute{N})\}\oplus_{ort} \{\breve{J}Rad(T%
\acute{N})\oplus \breve{J}ltr(T\acute{N})\}\oplus_{ort} D_{o}.
\end{equation*}

\begin{theorem}
Let $\acute{N}$ be an isotropic screen transversal lightlike submanifold of
a locally metallic semi-Riemannian manifold $(\breve{N},\breve{g},\breve{J})$%
. In this case, $\acute{N}$ is totally geodesic if and only if $D^{l}(\xi
_{1},\breve{J}\xi _{2})=0$ and $D^{l}(\xi _{1},Z)=0$ and there is no
component of $D^{s}(\xi _{1},N)$ in $\Gamma (\breve{J}Rad(T\acute{N})),$ for
any $\xi _{1},$ $\xi _{2}$ $\in $ $\Gamma (Rad(T\acute{N})),$ $N\in \Gamma
(ltr(T\acute{N}))$ and $Z\in \Gamma (D_{0}).$
\end{theorem}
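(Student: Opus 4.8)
The plan is to recast the condition ``$\acute{N}$ is totally geodesic'' as the vanishing of the second fundamental form $h=h^{\ell}+h^{s}$ on $T\acute{N}=Rad(T\acute{N})$, and then to isolate the three components of $h^{s}$ by pairing with a frame adapted to
\[
T\breve{N}|_{\acute{N}}=\{T\acute{N}\oplus ltr(T\acute{N})\}\oplus_{ort}\{\breve{J}Rad(T\acute{N})\oplus\breve{J}ltr(T\acute{N})\}\oplus_{ort}D_{o}.
\]
Throughout I would use that $\breve{\nabla}$ is metric, the Gauss--Weingarten formulas (\ref{9})--(\ref{11}), the identities (\ref{12})--(\ref{13}), and the parallelism (\ref{21a}).

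First I would dispose of the lightlike part. Since $h^{\ell}$ is symmetric and $g(h^{\ell}(\xi_{1},\xi),\xi)=0$ by (\ref{18}), polarizing in the radical argument forces $g(h^{\ell}(\xi_{1},\xi_{2}),\xi_{3})=0$ for all $\xi_{1},\xi_{2},\xi_{3}\in\Gamma(Rad(T\acute{N}))$; as $h^{\ell}$ is $ltr(T\acute{N})$-valued and $ltr(T\acute{N})$ is $\breve{g}$-dual to $Rad(T\acute{N})$, this gives $h^{\ell}(\xi_{1},\xi_{2})=0$ automatically. Hence $\acute{N}$ is totally geodesic if and only if $h^{s}(\xi_{1},\xi_{2})=0$ for all $\xi_{1},\xi_{2}\in\Gamma(Rad(T\acute{N}))$.

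Next, using (\ref{23}) and the orthogonality of $S(T\acute{N}^{\perp})$ to $Rad(T\acute{N})\oplus ltr(T\acute{N})$, I would record that $\breve{J}Rad(T\acute{N})$ and $\breve{J}ltr(T\acute{N})$ are totally lightlike with $\breve{g}$ restricting to a nondegenerate pairing between them, while $D_{o}$ is nondegenerate; thus $h^{s}(\xi_{1},\xi_{2})=0$ if and only if its pairings with $\breve{J}\xi$, $\breve{J}N$ and $Z$ all vanish, for $\xi\in\Gamma(Rad(T\acute{N}))$, $N\in\Gamma(ltr(T\acute{N}))$, $Z\in\Gamma(D_{o})$. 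For any $V\in\Gamma(S(T\acute{N}^{\perp}))$, by (\ref{9}) one has $\breve{g}(h^{s}(\xi_{1},\xi_{2}),V)=\breve{g}(\breve{\nabla}_{\xi_{1}}\xi_{2},V)$ (the tangent and $ltr$ parts being orthogonal to $V$), and since $\breve{g}(\xi_{2},V)=0$ the metric property gives $\breve{g}(\breve{\nabla}_{\xi_{1}}\xi_{2},V)=-\breve{g}(\xi_{2},\breve{\nabla}_{\xi_{1}}V)$; expanding $\breve{\nabla}_{\xi_{1}}V$ by (\ref{11}), only the term $D^{\ell}(\xi_{1},V)$ survives, since the $A$-term lies in $Rad(T\acute{N})$ and the $\nabla^{s}$-term in $S(T\acute{N}^{\perp})$, both $\breve{g}$-orthogonal to $\xi_{2}$. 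Taking $V=\breve{J}\xi_{2}$ and $V=Z$ and letting $\xi_{2}$ range over $\Gamma(Rad(T\acute{N}))$, this is equivalent to $D^{\ell}(\xi_{1},\breve{J}\xi_{2})=0$ and $D^{\ell}(\xi_{1},Z)=0$, the first two conditions.

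For the remaining pairing I would bring in the metallic structure: by (\ref{21}) and (\ref{21a}),
\[
\breve{g}(h^{s}(\xi_{1},\xi_{2}),\breve{J}N)=\breve{g}(\breve{\nabla}_{\xi_{1}}\xi_{2},\breve{J}N)=\breve{g}(\breve{J}\breve{\nabla}_{\xi_{1}}\xi_{2},N)=\breve{g}(\breve{\nabla}_{\xi_{1}}(\breve{J}\xi_{2}),N),
\]
and since $\breve{J}\xi_{2}\in\Gamma(S(T\acute{N}^{\perp}))$, expanding by (\ref{11}) and pairing with $N\in\Gamma(ltr(T\acute{N}))$ kills the $\nabla^{s}$- and $D^{\ell}$-terms, leaving $\breve{g}(h^{s}(\xi_{1},\xi_{2}),\breve{J}N)=-\breve{g}(A_{\breve{J}\xi_{2}}\xi_{1},N)$, which by (\ref{13}) equals $-\breve{g}(D^{s}(\xi_{1},N),\breve{J}\xi_{2})$. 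Requiring this to vanish for all $\xi_{2}\in\Gamma(Rad(T\acute{N}))$ and $N\in\Gamma(ltr(T\acute{N}))$ is precisely the condition that $D^{s}(\xi_{1},N)$ have no component along the appropriate $\breve{J}$-lightlike summand of $S(T\acute{N}^{\perp})$, i.e.\ the third condition; together with the previous paragraph this yields the stated equivalence in both directions. The step I expect to be the main obstacle is exactly this last one: because $\breve{J}Rad(T\acute{N})$ and $\breve{J}ltr(T\acute{N})$ are both degenerate, one must track carefully which summand a pairing such as $\breve{g}(\,\cdot\,,\breve{J}\xi_{2})$ actually detects, and cross-check against the direct expression $\breve{g}(h^{s}(\xi_{1},\xi_{2}),\breve{J}N)=-\breve{g}(\xi_{2},D^{\ell}(\xi_{1},\breve{J}N))$ coming from (\ref{11}) and against (\ref{12}), in order to pin down the precise $\breve{J}$-component of $D^{s}(\xi_{1},N)$ that occurs in the statement.
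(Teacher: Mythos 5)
Your proposal is correct and follows essentially the same route as the paper: reduce total geodesy to $h^{s}|_{Rad\times Rad}=0$, then pair $h^{s}(\xi_{1},\xi_{2})$ against $\breve{J}\xi$, $Z\in\Gamma(D_{o})$ and $\breve{J}N$, using metric compatibility of $\breve{\nabla}$, (\ref{11}), (\ref{13}) and (\ref{21a}) to convert these into the equations (\ref{*})--(\ref{***}) of the paper's proof. Your additional remarks (that $h^{\ell}$ vanishes automatically on $Rad(T\acute{N})\times Rad(T\acute{N})$, that the three pairings exhaust $S(T\acute{N}^{\perp})$, and the caution about which lightlike summand the pairing with $\breve{J}\xi_{2}$ actually detects) only make explicit points the paper leaves tacit.
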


\begin{proof}
From  (\ref{21}) and (\ref{21a}), we find
\begin{equation*}
\breve{g}(\breve{\nabla}_{\xi _{1}}\breve{J}\xi _{2},\xi )=\breve{g}(\breve{%
\nabla}_{\xi _{1}}\xi _{2},\breve{J}\xi ).
\end{equation*}%
Then, for $\breve{J}\xi _{2}$ $\in $ $\Gamma (\breve{J}Rad(T\acute{N}))\subset
S(T\acute{N}^{\perp }),$ from (\ref{11}), we obtain%
\begin{equation}
\breve{g}(D^{l}(\xi _{1},\breve{J}\xi _{2}),\xi )=\breve{g}(h^{s}(\xi
_{1},\xi _{2}),\breve{J}\xi ).  \label{*}
\end{equation}%
Similarly, we have%
\begin{equation*}
\breve{g}(\breve{\nabla}_{\xi _{1}}\breve{J}\xi _{2},N)=\breve{g}(\breve{%
\nabla}_{\xi _{1}}\xi _{2},\breve{J}N).
\end{equation*}%
Using (\ref{9}) and (\ref{11}), we get
\begin{equation}
-\breve{g}(D^{s}\left( \xi ,N\right) ,\breve{J}\xi _{2})=\breve{g}(h^{s}(\xi
_{1},\xi _{2}),\breve{J}N).  \label{**}
\end{equation}%
Also, since $\breve{\nabla}$ is a metric connection, for $Z\in \Gamma
(D_{0}),$ from (\ref{9}) and (\ref{11}) again, we have%
\begin{eqnarray}
\breve{g}(\breve{\nabla}_{\xi _{1}}\xi _{2},Z) &=&-\breve{g}(\xi _{2},\breve{%
\nabla}_{\xi _{1}}Z)  \notag \\
\breve{g}\left( h^{s}(\xi _{1},\xi _{2}),Z\right)  &=&\breve{g}\left( \xi
_{2},D^{l}\left( \xi _{1},Z\right) \right) .  \label{***}
\end{eqnarray}%
(\ref{*}), (\ref{**}) and (\ref{***}) complete the proof.
\end{proof}

\bigskip

{\small {\ \textbf{Feyza Esra Erdo\u{g}an} \newline
Faculty of Education, Department of Elementary Education \newline
Ad\i yaman University, 02040, Ad\i yaman, Turkey \newline
e-mail: ferdogan@adiyaman.edu.tr }}

{\small \smallskip }

{\small \textbf{Selcen Y\"{u}ksel Perkta\c{s}} \newline
Department of Mathematics, Faculty of Arts and Sciences \newline
Ad\i yaman University, 02040, Ad\i yaman, Turkey \newline
e-mail: sperktas@adiyaman.edu.tr }

{\small \smallskip }

{\small \textbf{Bilal Eftal Acet} \newline
Department of Mathematics \newline
Ad\i yaman University, 02040, Ad\i yaman, Turkey \newline
e-mail: eacet@adiyaman.edu.tr }

{\small \smallskip }

{\small \textbf{Adara Monica Blaga} \newline
Faculty of Mathematics and Computer Science, Department of Mathematics 
\newline
West University of Timi\c{s}oara, 300223, Timi\c{s}oara, Rom\^{a}nia \newline
e-mail: adarablaga@yahoo.com  }

\end{document}